\documentclass[11pt,leqno]{amsart}
\usepackage{amsmath,amssymb,amsthm}
\usepackage{hyperref}



\usepackage{bbm}

\DeclareMathOperator{\co}{co}

\DeclareMathOperator{\dens}{dens}
\DeclareMathOperator{\spann}{span}
\DeclareMathOperator{\card}{card}

\DeclareMathOperator{\kernel}{Ker}

\renewcommand{\geq}{\geqslant}
\renewcommand{\leq}{\leqslant}

\newtheorem{theorem}{Theorem}[section]
\newtheorem{lemma}[theorem]{Lemma}

\newtheorem{proposition}[theorem]{Proposition}

\theoremstyle{definition}

\newtheorem{example}[theorem]{Example}

\theoremstyle{remark}
\newtheorem{remark}[theorem]{Remark}
\numberwithin{equation}{section}


\def\fnote#1{\footnote}

\def\ignora#1{}
\def\n3#1{\left\vert  \! \left\vert \! \left\vert \, #1 \, \right\vert \!
  \right\vert \! \right\vert }


\newcommand{\iten}{\ensuremath{\widehat{\otimes}_\varepsilon}}
\newcommand{\pten}{\ensuremath{\widehat{\otimes}_\pi}}

\begin{document}

\keywords{L-orthogonality; octahedrality; Daugavet property; Banach space theory}

\subjclass[2010]{46B20; 46B22}

\title[$L$-orthogonality, octahedrality and Daugavet property]{$L$-orthogonality, octahedrality and Daugavet property in Banach spaces}

\author{Gin\'es L\'opez-P\'erez}\thanks{The research of Gin\'es L\'opez-P\'erez was supported by MICINN (Spain) Grant PGC2018-093794-B-I00 (MCIU, AEI, FEDER, UE), by Junta de Andaluc\'ia Grant A-FQM-484-UGR18
and by Junta de Andaluc\'ia Grant FQM-0185.}
\address[Gin\'es L\'opez-P\'erez]{Universidad de Granada, Facultad de Ciencias.
Departamento de An\'{a}lisis Matem\'{a}tico, 18071-Granada
(Spain)} \email{ glopezp@ugr.es}
\urladdr{\url{https://wpd.ugr.es/local/glopezp}}

\author{ Abraham Rueda Zoca }\thanks{The research of Abraham Rueda Zoca was supported by Vicerrectorado de Investigaci\'on y Transferencia de la Universidad de Granada in the program ``Contratos puente'', by MICINN (Spain) Grant PGC2018-093794-B-I00 (MCIU, AEI, FEDER, UE), by Junta de Andaluc\'ia Grant A-FQM-484-UGR18
and by Junta de Andaluc\'ia Grant FQM-0185.}
\address[A. Rueda Zoca]{Universidad de Granada, Facultad de Ciencias.
Departamento de An\'{a}lisis Matem\'{a}tico, 18071-Granada
(Spain)} \email{ abrahamrueda@ugr.es}
\urladdr{\url{https://arzenglish.wordpress.com}}

\maketitle 

\begin{abstract} 
In contrast with the separable case, we prove that the existence of almost $L$-orthogonal vectors in a nonseparable Banach space $X$ (octahedrality) does not imply the existence of nonzero vectors in $X^{**}$ being $L$-orthogonal to $X$, which shows that the answer to an environment question in \cite{gk} is negative. Furthermore, we prove that the abundance of almost $L$-orthogonal vectors in a Banach space $X$ (almost Daugavet property) whose density character is $\omega_1$ implies the abundance of nonzero vectors in $X^{**}$ being $L$-orthogonal to $X$. In fact, we get that a Banach space $X$ whose density character is $\omega_1$ verifies the Daugavet property if, and only if, the set of vectors in $X^{**}$ being $L$-orthogonal to $X$ is weak-star dense in $X^{**}$. We also prove that, under CH, the previous characterisation is false for Banach spaces with larger density character. Finally, some consequences on Daugavet property in the setting of $L$-embedded spaces are obtained.
\end{abstract}

\section{Introduction}

The concept of orthogonality in the setting of Banach spaces has been a central topic in the theory of Banach spaces. There are important and different concepts of orthogonality in Banach spaces in the literature as the given ones in \cite{ja} and \cite{ro}. For example, B. Maurey proved in \cite{maurey} that a separable Banach space contains an isomorphic copy of $\ell_1$ if and only if, there is a nonzero element $x^{**}\in X^{**}$ being symmetric orthogonal to $X$, in the terminology of \cite{ro}, that is, $\Vert x^{**}+x\Vert =\Vert x^{**}-x\Vert $ for every $x\in X$.  One of the strongest concepts of orthogonality is the $L$-orthogonality: two vectors $x$ and $y$ in a Banach space $X$ are called $L$-orthogonal if $\Vert x+y\Vert =\Vert x\Vert +\Vert y\Vert$. An element $x$ in $X$ will be called $L$-orthogonal to a subspace $Y$ of $X$ if $x$ is $L$-orthogonal to every element in $Y$. In the setting of Hilbert spaces, it is well known that for every closed and proper subspace there is a non-zero orthogonal vector to that subspace. In this sense, G. Godefroy proved in \cite[Theorem II.4]{god} that a separable Banach space $X$ containing isomorphic copies of $\ell_1$ can be equivalently renormed so that there is a vector $x^{**}$ in the unit sphere of $X^{**}$ being $L$-orthogonal to $X$. The aim of this note is to study the existence and abundance of vectors in the bidual space $X^{**}$ of a Banach space $X$ being $L$-orthogonal to $X$, in terms of the existence and abundance of vectors in $X$ which are almost $L$-orthogonal to finite-dimensional subspaces of $X$. It is natural to  say that a Banach space $X$ contains almost $L$-orthogonal vectors if, for every $x_1,\ldots ,x_n$ vectors in the unit sphere of $X$ and for every $\varepsilon>0$, there is some vector $x$ in the unit ball of $X$ such that $\Vert x+x_i\Vert>2-\varepsilon$ for every $1\leq i\leq n$. This is exactly equivalent to say that the norm of $X$ is octahedral, a concept considered by N. Kalton and G. Godefroy in \cite{gk}. In fact, it was proved in \cite{lola} that a Banach space $X$ containing isomorphic copies of $\ell_1$ can be equivalently renormed so that the new bidual norm is octahedral, and so a bidual renorming of $X^{**}$ contains almost $L$-orthogonal vectors. Notice that, in view of the characterisation given in \cite[Theorem 2.1]{blrjfa}, the previous result is related to the question whether a Banach space failing to be strongly regular can be equivalently renormed enjoying any diameter two property (see \cite{blradv} and references therein). Similarly, we will say that a Banach space $X$ has abundance of $L$- orthogonal vectors with respect to a norming subspace $Y$ of $X^*$ if, for every $x_1,\ldots ,x_n$ vectors in the unit sphere of $X$, for every nonempty $\sigma(X,Y)$-open subset $U$ of the unit ball of $X$ and for every $\varepsilon>0$, there is some vector $x$ in the unit ball of $X$ such that $\Vert x+x_i\Vert>2-\varepsilon$ for every $1\leq i\leq n$. This is exactly equivalent to say that $X$ satisfies the almost Daugavet property with respect $Y$ (see \cite{ksw1,ksw2} and Lemma \ref{lemma:tecniADP}). 

Recall that $X$ has the  \textit{Daugavet property with respect to $Y$} if every rank one operator $T:X\longrightarrow X$ of the form $T=y^*\otimes x$, for $x\in X$ and $y\in Y$, satisfies the equation
$$\Vert T+I\Vert=1+\Vert T\Vert,$$
where $I$ denotes the identity operator. If $Y$ is a norming of $X$, we say that $X$ has the \textit{almost Daugavet property}. We will say that $X$ has the Daugavet property if $Y=X^*$.

It is then natural to ask if for Banach spaces $X$ containing  or having abundance of almost $L$-orthogonal vectors one can find  some or many elements in $X^{**}$  being $L$-orthogonal to $X$. For example, in the case that $X$ is separable, G. Godefroy and N. Kalton proved in \cite[Lemma 9.1]{gk} that if $X$ contains almost $L$-orthogonal vectors, that is, the norm of $X$ is octahedral, then there are elements in $X^{**}$ being $L$-orthogonal to $X$, opening the question in the nonseparable setting.  

After some preliminary results in Section \ref{section:preliminares}, we prove in Section \ref{section:mainresuls} that the above question has a negative answer (Theorem \ref{octahedralnonorthogonal}), exhibiting examples of Banach spaces $X$ containing almost $L$-orthogonal vectors, that is, Banach spaces with an octahedral norm, whose bidual space lacks of nonzero vectors being $L$-orthogonal to $X$. In contrast with the above, we also prove in Section \ref{section:mainresuls} that the abundance of almost $L$-orthogonal vectors in a Banach space $X$ with $\dens(X)=\omega_1$ implies the abundance of vectors in $X^{**}$  being $L$-orthogonal to $X$ (Theorem \ref{theo:ADPpartial}). In other more precise words, if $X$ is a Banach space with the almost Daugavet property with respect to some norming subspace $Y$ of $X^*$ and $\dens(X)=\omega_1$, then the set of elements in $X^{**}$ being $L$-orthogonal to $X$ is 
$\sigma(X^{**},Y)$-dense in $X^{**}$. Then, as an immediate consequence, we get that a Banach space whose density character is $\omega_1$ satisfies de Daugavet property if, and only if, the set of elements in $X^{**}$ being $L$-orthogonal to $X$ is $w^*$-dense in $X^{**}$ (Theorem \ref{theocaradp}).  However, we show that the previous characterisation is false for higher density characters. Indeed, we even exhibit in Example \ref{example:w2false} an example of a Banach space $X$ with $\dens(X)=2^c$ enjoying the Daugavet property but having no $L$-orthogonal element, proving that Theorem \ref{theocaradp} is sharp.

In Section \ref{section:L-embebidos} we get some consequences on Daugavet property for Banach spaces being $L$-embedded. In particular we get that $X\pten Y$ has the Daugavet property, whenever $X$ is an $L$-embedded Banach space with $\dens(X)=\omega_1$ and $Y$ is a nonzero Banach space such that either $X^{**}$ or $Y$ has the metric approximation property (Theorem \ref{tensoLembesepa}).

\section{Preliminaries}\label{section:preliminares}

We will consider only real Banach spaces. Given a Banach space $X$, we will denote the unit ball and the unit sphere of $X$ by $B_X$ and $S_X$ respectively. Moreover, given $x\in X$ and $r>0$, we will denote $B(x,r)=x+rB_X=\{y\in X:\Vert x-y\Vert\leq r\}$. We will also denote by $X^*$ the topological dual of $X$. If $Y$ is a subspace of $X^*$, $\sigma(X,Y)$ will denote the coarsest topology on $X$ so that elements of $Y$ are continuous. Also, $Y$ is \textit{norming} if $\Vert x\Vert =\sup_{y\in Y,\Vert y\Vert\leq 1}\vert y(x)\vert$. Given a bounded subset $C$ of $X$, we will mean by a \textit{slice of $C$} a set of the following form
$$S(C,x^*,\alpha):=\{x\in C:x^*(x)>\sup x^*(C)-\alpha\}$$
where $x^*\in X^*$ and $\alpha>0$. If $X$ is a dual Banach space, the previous set will be called a \textit{$w^*$-slice} if $x^*$ belongs to the predual of $X$. Note that finite intersections of slices of $C$ (respectively of $w^*$-slices of $C$) form a basis for the inherited weak (respectively weak-star) topology of $C$. Throughout the text $\omega_1$ (respectively $\omega_2$) will denote the first uncountable ordinal (respectively the first ordinal whose cardinal is strictly bigger than the cardinality of $\omega_1$).

According to \cite{hww}, a Banach space $X$ is said to be an \textit{$L$-embedded space} if there exists a subspace $Z$ of $X^{**}$ such that $X^{**}=X\oplus_1 Z$. Examples of $L$-embedded Banach spaces are $L_1(\mu)$ spaces, preduals of von Neumann algebras, duals of $M$-embedded spaces or the dual of the disk algebra (see \cite[Example IV.1.1]{hww} for formal definitions and details). 

Given two Banach spaces $X$ and $Y$ we will denote by $L(X,Y)$ (respectively $K(X,Y)$) the space of all linear and bounded (respectively linear and compact) operators from $X$ to $Y$, and we will denote by $X\pten Y$ and $X\iten Y$ the projective and injective tensor product of $X$ and $Y$, respectively.  Moreover, we will say that $X$ has the \textit{metric approximation property} if there exists a net of finite rank and norm-one operators $S_\alpha:X\longrightarrow X$ such that $S_\alpha(x)\rightarrow x$ for all $x\in X$. See \cite{rya} for a detailed treatment of the tensor product theory and approximation properties.

Let $Z$ be a subspace of a Banach space $X$.
We say that $Z$ is an \emph{almost isometric ideal} (ai-ideal) in $X$ if
$X$ is locally complemented in $Z$ by almost isometries.
This means that for each $\varepsilon>0$ and for each
finite-dimensional subspace $E\subseteq X$ there exists a linear
operator $T:E\to Z$ satisfying
\begin{enumerate}
\item\label{item:ai-1}
  $T(e)=e$ for each $e\in E\cap Z$, and
\item\label{item:ai-2}
  $(1-\varepsilon) \Vert e \Vert \leq \Vert T(e)\Vert\leq
  (1+\varepsilon) \Vert e \Vert$
  for each $e\in E$,
\end{enumerate}
i.e. $T$ is a $(1+\varepsilon)$ isometry fixing the elements of $E$.
If the $T$ satisfies only (\ref{item:ai-1}) and the right-hand side of
(\ref{item:ai-2}) we get the well-known
concept of $Z$ being an \emph{ideal} in $X$ \cite{gks}.

Note that the Principle of Local Reflexivity means that $X$ is an ai-ideal in $X^{**}$
for every Banach space $X$. Moreover, there are well known Banach spaces properties, as the Daugavet property, octahedrality and
all of the diameter two properties, being inherited by ai-ideals
(see \cite{abrahamsen} and \cite{aln2}). Furthermore, given two Banach spaces $X$ and $Y$ and given an ideal $Z$ in $X$, then $Z\pten Y$ is a closed subspace of $X\pten Y$ (see e.g. \cite[Theorem 1]{rao}). It is also known that whenever $X^{**}$ or $Y$ has the metric approximation property then $X^{**}\pten Y$ is an isometric subspace of $(X\pten Y)^{**}$ (see \cite[Proposition 2.3]{llr2} and \cite[Theorem 1]{rao}). 

Throughout the text we will make use of the following two results, which we include here for the sake of completeness and for easy reference.

\begin{theorem}\label{theo:hbaioper}\cite[Theorem 1.4]{aln2}
Let $X$ be a Banach space and let $Z$ be an almost isometric ideal in $X$. Then there is a linear isometry $\varphi: Z^*\longrightarrow X^*$ such that
$$\varphi(z^*)(z)=z^*(z)$$
holds for every $z\in Z$ and $z^*\in Z^*$ and satisfying that, for every $\varepsilon>0$, every finite-dimensional subspace $E$ of $X$ and every finite-dimensional subspace $F$ of $Z^*$, we can find an operator $T:E\longrightarrow Z$ satisfying
\begin{enumerate}
\item $T(e)=e$ for every $e\in E\cap Z$, 
\item $(1-\varepsilon)\Vert e\Vert\leq \Vert T(e)\Vert\leq (1+\varepsilon)\Vert e\Vert$ holds for every $e\in E$, and;
\item $f(T(e))=\varphi(f)(e)$ holds for every $e\in E$ and every $f\in F$.
\end{enumerate}
\end{theorem}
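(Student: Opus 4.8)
The plan is to realise $\varphi$ as an ultralimit of the almost-isometric local operators furnished by the definition of ai-ideal, and then to extract the compatibility condition (3) by a small perturbation of one of those operators.

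Concretely, let $\Lambda$ be the set of pairs $\lambda=(E_\lambda,\varepsilon_\lambda)$ with $E_\lambda\subseteq X$ finite dimensional and $\varepsilon_\lambda>0$, directed by declaring $(E,\varepsilon)\preceq(E',\varepsilon')$ when $E\subseteq E'$ and $\varepsilon\geq\varepsilon'$. For each $\lambda$ use that $Z$ is an ai-ideal to fix an operator $S_\lambda\colon E_\lambda\to Z$ with $S_\lambda(e)=e$ for $e\in E_\lambda\cap Z$ and $(1-\varepsilon_\lambda)\Vert e\Vert\leq\Vert S_\lambda(e)\Vert\leq(1+\varepsilon_\lambda)\Vert e\Vert$ for $e\in E_\lambda$. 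Fix an ultrafilter $\mathcal U$ on $\Lambda$ refining the order filter, and for $z^*\in Z^*$ and $x\in X$ put
$$\varphi(z^*)(x):=\lim_{\mathcal U}z^*\big(S_\lambda(x)\big),$$
which is well defined since $x\in E_\lambda$ for $\mathcal U$-almost every $\lambda$ and the net $\big(z^*(S_\lambda(x))\big)$ is bounded. I would then check the routine facts: $\varphi(z^*)$ is linear in $x$ and $\varphi$ is linear in $z^*$ (from linearity of each $S_\lambda$ on $E_\lambda$ and of the ultralimit); $\lim_{\mathcal U}\varepsilon_\lambda=0$ gives $\vert\varphi(z^*)(x)\vert\leq\Vert z^*\Vert\,\Vert x\Vert$, so $\varphi(z^*)\in X^*$ with $\Vert\varphi(z^*)\Vert\leq\Vert z^*\Vert$; and for $z\in Z$ one has $S_\lambda(z)=z$ for $\mathcal U$-almost every $\lambda$, whence $\varphi(z^*)(z)=z^*(z)$. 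This last identity is the required extension property, and it also forces $\Vert\varphi(z^*)\Vert\geq\Vert z^*\Vert$, so $\varphi$ is a linear isometry.

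For (3), fix $\varepsilon>0$, a finite dimensional $E\subseteq X$ with basis $e_1,\dots,e_m$, and a finite dimensional $F\subseteq Z^*$ with basis $f_1,\dots,f_k$. Since each $\varphi(f_i)(e_j)$ is by definition $\lim_{\mathcal U}f_i(S_\lambda(e_j))$, the set of $\lambda$ with $E\subseteq E_\lambda$, $\varepsilon_\lambda<\varepsilon/2$ and $\vert f_i(S_\lambda(e_j))-\varphi(f_i)(e_j)\vert<\eta$ for all $i,j$ belongs to $\mathcal U$, hence is nonempty; pick such a $\lambda$ and set $S:=\restr{S_\lambda}{E}$. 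I would then correct $S$ by a linear map $D\colon E\to Z$ vanishing on $E\cap Z$ and with $f_i(D(e_j))=\eta_{ij}:=\varphi(f_i)(e_j)-f_i(S(e_j))$: writing $E=(E\cap Z)\oplus G$ and expressing each $e_j$ as $w_j+\sum_l\alpha_{jl}e_{j_l}$ with $w_j\in E\cap Z$ and the $G$-components of the $e_{j_l}$ forming a basis of $G$, one checks the identity $\eta_{ij}=\sum_l\alpha_{jl}\eta_{i,j_l}$, so it suffices to prescribe $D$ on the $e_{j_l}$; and since $f_1,\dots,f_k$ are linearly independent in $Z^*$ one can pick $D(e_{j_l})\in Z$ realising the required values with $\Vert D\Vert$ bounded by a constant depending only on $E$ and $F$ times $\eta$. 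Taking $\eta$ small enough, $T:=S+D$ still fixes $E\cap Z$, is a $(1+\varepsilon)$-isometry on $E$, and satisfies $f_i(T(e_j))=\varphi(f_i)(e_j)$, hence $f(T(e))=\varphi(f)(e)$ for every $e\in E$ and $f\in F$, which is (3).

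The step I expect to be the real obstacle is the perturbation in the previous paragraph: one must verify that the linear system defining $D$ is consistent, i.e. the identity $\eta_{ij}=\sum_l\alpha_{jl}\eta_{i,j_l}$, and this is precisely where one uses that $S$ fixes $E\cap Z$ together with the extension identity $\restr{\varphi(f_i)}{Z}=f_i$ already established; once consistency is in hand, choosing $D$ of small norm with range in $Z$ is elementary, and everything else reduces to standard ultrafilter bookkeeping.
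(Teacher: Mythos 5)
The paper itself contains no proof of this statement: it is quoted verbatim from \cite[Theorem 1.4]{aln2} ``for the sake of completeness and for easy reference'', so there is nothing in-paper to compare against. Your argument is correct and is essentially the standard proof of that theorem: $\varphi$ is built as an ultrafilter limit of the local $(1+\varepsilon_\lambda)$-isometries $S_\lambda$ furnished by the ai-ideal property (the order filter guarantees that the sets $\{\lambda: x\in E_\lambda,\ \varepsilon_\lambda<\delta\}$ lie in $\mathcal U$, which gives well-definedness, $\Vert\varphi(z^*)\Vert\leq\Vert z^*\Vert$, and the extension identity, hence isometry), and the exact compatibility in (3) is then achieved by perturbing a suitable $S_\lambda\restricted E$ by a small operator $D:E\to Z$ vanishing on $E\cap Z$. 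The two delicate points are precisely the ones you identify and settle correctly: the linear system defining $D$ is consistent because the functional $e\mapsto \varphi(f_i)(e)-f_i(S(e))$ is linear on $E$ and vanishes on $E\cap Z$ (using that $S$ fixes $E\cap Z$ and that $\varphi(f_i)$ extends $f_i$); and $D$ can be taken with $\Vert D\Vert\leq C(E,F)\,\eta$ by fixing in advance vectors $z_1,\dots,z_k\in Z$ biorthogonal to the linearly independent $f_1,\dots,f_k$ (such vectors exist since $z\mapsto(f_1(z),\dots,f_k(z))$ maps $Z$ onto $\R^k$), the constant being independent of $\eta$ and of $\lambda$. With $\eta$ small enough, $T=S+D$ still fixes $E\cap Z$ and is a $(1+\varepsilon)$-isometry, so your proof is complete and matches the spirit of the original argument in \cite{aln2}.
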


Following the notation of \cite{abrahamsen}, to such an operator $\varphi$ we will refer as \textit{an almost-isometric Hahn-Banach extension operator}. Notice that if $\varphi:Z^*\longrightarrow X^*$ is an almost isometric Hahn-Banach extension operator, then $\varphi^*:X^{**}\longrightarrow Z^{**}$ is a norm-one projection (see e.g. \cite[Theorem 3.5]{kaltonlocomp}).

Another central result in our main theorems will be the following, coming from \cite[Theorem 1.5]{abrahamsen}

\begin{theorem}\label{theo:exteaiideales}
Let $X$ be a Banach space, let $Y$ be a separable subspace of $X$ and let $W\subseteq X^*$ be a separable subspace. Then there exists a separable almost isometric ideal $Z$ in $X$ containing $Y$ and an almost isometric Hahn-Banach extension operator $\varphi:Z^*\longrightarrow X^*$ such that $\varphi(Z^*)\supset W$.
\end{theorem}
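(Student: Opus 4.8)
Since Theorem~\ref{theo:exteaiideales} is \cite[Theorem 1.5]{abrahamsen}, I would only indicate the idea. The plan is a \emph{separable closing-off} construction run simultaneously in $X$ and in $X^*$: I would build increasing chains of separable subspaces $Y=Y_0\subseteq Y_1\subseteq\cdots\subseteq X$ and $W=W_0\subseteq W_1\subseteq\cdots\subseteq X^*$, set $Z:=\overline{\bigcup_n Y_n}$ and $V:=\overline{\bigcup_n W_n}$, and then define the extension operator on a countable dense subset of $Z^*$ using the functionals accumulated in $V$, extending it by density to a norm-one operator $\varphi:Z^*\to X^*$ with $\varphi(z^*)|_Z=z^*$ for all $z^*\in Z^*$. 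Since $W_0\subseteq V$ and, by the construction, each $w\in W_0$ coincides with the $\varphi$-image of $w|_Z$, this yields $W\subseteq\varphi(Z^*)$.

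The heart of the matter is the inductive step, where from $Y_n$ and $W_n$ one must produce $Y_{n+1}$ and $W_{n+1}$ recording enough data to force the \emph{limit} space $Z$ to be an almost isometric ideal admitting $\varphi$ as an almost-isometric Hahn--Banach extension operator with $\varphi(Z^*)\supseteq W$. The obstacle is that the defining property of an ai-ideal quantifies over \emph{all} finite-dimensional subspaces $E\subseteq X$, while $X$ need not be separable. What rescues the closing-off is that only the isometric type of such an $E$ \emph{relative to} $E\cap Z$ (together with the action of finitely many functionals on $E$) is relevant, and such relative types range over a compact metric space of Banach--Mazur type, so the ones occurring in $X$ form a separable set; hence at each stage only countably many requirements have to be approximately met. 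Concretely, for each finite-dimensional $E_0$ from a fixed countable family dense among the finite-dimensional subspaces of $Y_n$, each finite-dimensional $F_0$ from a countable family dense in $W_n$, and each $\varepsilon$ of the form $1/k$, I would select countably many finite-dimensional $E\subseteq X$ realising a dense set of the relative types of $E_0$ occurring in $X$, together with operators $T$ as in Theorem~\ref{theo:hbaioper}, and absorb all of their ranges into $Y_{n+1}$; symmetrically, on the dual side I would absorb into $W_{n+1}$ the functionals needed to extend the members of $W_n$ and to witness the third condition of Theorem~\ref{theo:hbaioper}. All of these additions are separable, so $Y_{n+1}$ and $W_{n+1}$ stay separable.

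The step I expect to cost the most care is the passage to the limit. A finite-dimensional $E\subseteq X$ meets $Z=\overline{\bigcup_n Y_n}$ in a subspace that lies only \emph{approximately} in some $Y_n$, so to produce the required operator $T:E\to Z$ one has to compose several of the recorded near-isometries — each of them only approximately fixing the relevant subspace — and run the resulting perturbation estimates with carefully allocated $\varepsilon$-budgets, upgrading ``approximately the identity on $E\cap Z$'' to ``exactly the identity on $E\cap Z$'' by a finite-dimensional correction. In parallel one must check that the operator $\varphi$ obtained in the limit is simultaneously well defined, linear, of norm one, a genuine extension operator, and compatible with all the local operators $T$. Once this bookkeeping is in place, verifying that $Z$ is a separable ai-ideal containing $Y$ with $\varphi(Z^*)\supseteq W$ is routine.
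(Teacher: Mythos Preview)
The paper does not prove this statement; it is quoted verbatim from \cite[Theorem 1.5]{abrahamsen} and used as a black box, so there is no proof in the paper to compare your proposal against. You correctly identify the source, and your sketch of a separable closing-off argument run simultaneously in $X$ and $X^*$ is indeed the shape of the proof in \cite{abrahamsen} (which in turn refines the Sims--Yost/Heinrich--Mankiewicz technique); your account of where the care lies---controlling the limit stage and upgrading approximate fixing of $E\cap Z$ to exact fixing---is accurate in spirit, though the appeal to a ``compact metric space of Banach--Mazur type'' is more heuristic than what the actual proof uses.
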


According to \cite{gk}, given a Banach space $X$, the \textit{ball topology}, denoted by $b_X$, is defined as the coarsest topology on $X$ so that every closed ball is closed in $b_X$. As a consequence, a basis for the topology $b_X$ is formed by the sets of the following form
$$X\setminus\bigcup_{i=1}^n B(x_i,r_i),$$
where $x_1,\ldots, x_n$ are elements of $X$ and $r_1,\ldots, r_n$ are positive numbers.

Let us end by giving a pair of technical results which will be used in the proof of Theorem \ref{theo:almostdp}. The first one can be seen as a kind of generalisation of the classical Bourgain Lemma \cite[Lemma II.1]{ggms}, which asserts that, given a Banach space $X$, then every non-empty weakly open subset of $B_X$ contains a convex combination of slices of $B_X$. The following result already appeared in \cite{ksw2} without a complete proof. However, let us provide a proof here for the sake of completeness.

\begin{lemma}\label{lemabourrevisi}
Let $X$ be a Banach space and $Y\subseteq X^*$ be a norming subspace for $X$. Let $U$ be a non-empty $\sigma(X,Y)$ open subset of $B_X$. Then $U$ contains a convex combination of $\sigma(X,Y)$-slices of $B_X$.
\end{lemma}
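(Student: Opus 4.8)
The plan is to carry out the classical argument behind Bourgain's Lemma, making sure that all the functionals produced can be chosen inside $Y$. Fix $x_0\in U$. Since the sets $\{x\in B_X:\ \abs{y_i^*(x-x_0)}<\varepsilon,\ 1\le i\le n\}$ with $y_1^*,\dots,y_n^*\in Y$ and $\varepsilon>0$ form a basis of neighbourhoods of $x_0$ for the topology that $B_X$ inherits from $\sigma(X,Y)$, we may pick such $y_1^*,\dots,y_n^*$ and $\varepsilon$ so that the corresponding basic neighbourhood $W$ satisfies $W\subseteq U$. Consider the bounded linear operator $T\colon X\to\R^n$, $T(x)=(y_1^*(x),\dots,y_n^*(x))$, and set $C:=\overline{T(B_X)}$, which is a compact convex subset of $\R^n$ because $T(B_X)$ is bounded and convex. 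As $T(x_0)\in C$, Carath\'eodory's theorem provides a convex combination $T(x_0)=\sum_{k=1}^m\lambda_k c_k$ with each $c_k$ an extreme point of $C$ and each $\lambda_k>0$.

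The finite-dimensional ingredient I will need is that \emph{every extreme point of a compact convex subset of $\R^n$ is a denting point}, i.e.\ it belongs to slices of that set of arbitrarily small diameter. I would prove this directly: fix $\rho>0$ and put $K_\rho:=\{x\in C:\norm{x-e}\ge\rho\}$, which is compact, so its convex hull $\co(K_\rho)$ is again compact; if $e\in\co(K_\rho)$, then Carath\'eodory's theorem would express $e$ as a nontrivial convex combination of points of $K_\rho$, contradicting the extremality of $e$, so $e\notin\co(K_\rho)$; separating $e$ from the compact convex set $\co(K_\rho)$ by a suitable $\psi\in(\R^n)^*$ produces a slice $S(C,\psi,\alpha)$ containing $e$ and disjoint from $K_\rho$, hence of diameter at most $2\rho$ (the case $K_\rho=\emptyset$ being trivial).

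Granting this, for each $k$ choose $\psi_k\in(\R^n)^*$ and $\beta_k>0$ with $c_k\in S(C,\psi_k,\beta_k)$ and $\diam S(C,\psi_k,\beta_k)<\varepsilon$. The point of the construction is that $\psi_k\circ T$ is a linear combination of $y_1^*,\dots,y_n^*$ and hence belongs to $Y$; moreover $\sup_{B_X}(\psi_k\circ T)=\sup_C\psi_k$, so the $\sigma(X,Y)$-slice $S_k:=S(B_X,\psi_k\circ T,\beta_k)$ is non-empty and satisfies $T(S_k)\subseteq S(C,\psi_k,\beta_k)$. It then remains to verify that the convex combination of slices $\sum_{k=1}^m\lambda_k S_k$ is contained in $U$: if $x_k\in S_k$ for each $k$, then $T(x_k)$ and $c_k$ both lie in $S(C,\psi_k,\beta_k)$, so $\norm{T(x_k)-c_k}<\varepsilon$, whence $\norm{T\bigl(\sum_{k=1}^m\lambda_k x_k\bigr)-T(x_0)}=\norm{\sum_{k=1}^m\lambda_k\bigl(T(x_k)-c_k\bigr)}<\varepsilon$; this means $\abs{y_i^*\bigl(\sum_{k=1}^m\lambda_k x_k-x_0\bigr)}<\varepsilon$ for every $i$, so $\sum_{k=1}^m\lambda_k x_k\in W\subseteq U$.

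The only step that is not routine bookkeeping is the finite-dimensional denting lemma of the second paragraph; alternatively one could obtain the decomposition of $T(x_0)$ by induction on $\dim$, but the extreme-point route is the shortest. I would also record the harmless conventions that indices with $\lambda_k=0$ are discarded and that the case in which $T(x_0)$ is itself an extreme point of $C$ is covered (a single slice, $m=1$).
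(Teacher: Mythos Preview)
Your argument is correct and follows a genuinely different route from the paper. The paper lifts $U$ to a $\sigma(X^{**},Y)$-open subset $\hat U$ of $B_{X^{**}}$, invokes Krein--Milman there (licit because $Y$ norming makes $B_{X^{**}}$ $\sigma(X^{**},Y)$-compact) to place a convex combination of extreme points of $B_{X^{**}}$ inside $\hat U$, and then runs a Choquet-type separation argument to show that every basic $\sigma(X^{**},Y)$-neighbourhood of an extreme point, intersected with $B_X$, contains a single $\sigma(X,Y)$-slice. You bypass the bidual entirely: you push $B_X$ forward to $\R^n$ via finitely many elements of $Y$, apply Carath\'eodory together with the elementary finite-dimensional fact that extreme points of compact convex sets are denting, and then pull the resulting small slices back through $T$. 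Your route is more self-contained---only finite-dimensional convexity is needed---and yields the extra information that the slicing functionals can be taken in $\spann\{y_1^*,\dots,y_n^*\}\subseteq Y$; the paper's route, by contrast, makes explicit the role of $\ext{B_{X^{**}}}$ and stays closer in spirit to the Choquet-flavoured treatment of Bourgain's lemma in \cite{ggms}.
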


\begin{proof} Let $\hat{U}$ be the $\sigma(X^{**},Y)$-open subset of $B_{X^{**}}$ defined by $U$. Notice that
$$B_{X^{**}}=\overline{\co}^{w^*}(\operatorname{Ext}(B_{X^{**}}))\subseteq \overline{\co}^{\sigma(X^{**},Y)}(\operatorname{Ext}(B_{X^{**}}))$$
by Krein-Milman theorem, so we can find a convex combination of extreme points $\sum_{i=1}^n \lambda_i e_i\in \hat U$. Since the sum in $X^{**}$ is $\sigma(X^{**},Y)$ continuous we can find, for every $i\in \{1,\ldots, n\}$, a $\sigma(X^{**},Y)$ open subset of $B_{X^{**}}$ such that $e_i\in V_i$ holds for every $i$ and such that $\sum_{i=1}^n \lambda_i V_i\subseteq \hat U$. Since the following chain of inclusions hold
$$\sum_{i=1}^n \lambda_i (V_i\cap B_X)\subseteq \left( \sum_{i=1}^n \lambda_i V_i\right) \cap B_X\subseteq \hat U\cap B_X=U,$$
the following claim finishes the proof.

\textbf{Claim:} Given $i\in\{1,\ldots, n\}$ we can find a slice $S_i$ such that $S_i\subseteq V_i\cap B_X$.

\begin{proof}[Proof of the Claim.]
By the definition of the $\sigma(X^{**},Y)$ we can assume that $V_i=\bigcap\limits_{j=1}^{k_i}T_j$ where every $T_j$ is a $\sigma(X^{**},Y)$-slice of $B_{X^{**}}$. Since $e_i\in V_i$ it follows that $e_i\notin \bigcup\limits_{j=1}^{k_i} B_{X^{**}}\setminus T_j$. Now $e_i$ is an extreme point of $B_{X^{**}}$ and then $e_i\notin \co\left(\bigcup\limits_{j=1}^{k_i} B_{X^{**}}\setminus T_j \right)$. Notice that $B_{X^{**}}\setminus T_j$ is $\sigma(X^{**},Y)$-closed in the $\sigma(X^{**},Y)$-compact space $B_{X^{**}}$ for every $j$ and, since it is additionally convex, it follows that $\co\left(\bigcup\limits_{j=1}^{k_i} B_{X^{**}}\setminus T_j \right)$ is $\sigma(X^{**},Y)$ compact too. Since $e_i\notin \co\left(\bigcup\limits_{j=1}^{k_i} B_{X^{**}}\setminus T_j \right)$ then we can find $x\in B_X$ such that $x\notin \overline{\co}^{\sigma(X,Y)}\left(\bigcup\limits_{j=1}^{k_i} B_{X^{**}}\setminus T_j \right)$. By a separation argument we can find $y^*\in S_Y$ and $\alpha>0$ such that
$$y^*(x)>\alpha>\sup\limits_{z\in Z} y^*(z)$$
for $Z=\bigcup\limits_{j=1}^{k_i} (B_{X^{**}}\setminus T_j)\cap B_X$. If we define
$$S_i:=\{z\in B_X: y^*(z)>\alpha\}$$
it follows that $S_i$ a $\sigma(X,Y)$-slice. Furthermore, given $z\in S_i$ it follows that $y^*(z)>\alpha$, so $z\in \bigcap\limits_{j=1}^{k_i} T_j\cap B_X=V_i\cap B_X$, which completes the proof of the claim.
\end{proof} \end{proof}

Let us end by giving a brief sketch of proof of the following lemma, which is an easy extension of \cite[Corollary 3.4]{ksw1}.

\begin{lemma}\label{lemma:tecniADP}
Let $X$ be a Banach space and assume that $X$ has the almost Daugavet property with respect to a norming subspace $Y\subseteq X^*$. Then, for every $x_1,\ldots, x_n\in S_X$, every $\varepsilon>0$ and every non-empty $\sigma(X,Y)$-open subset $U$ of $B_X$ there exists $z\in U$ such that
$$\Vert x_i+z\Vert>2-\varepsilon$$
for every $i\in\{1,\ldots, n\}$.
\end{lemma}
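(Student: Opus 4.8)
The plan is to unwind the definitions and then apply Lemma~\ref{lemabourrevisi}. First I would recall that, by definition, $X$ has the almost Daugavet property with respect to $Y$ means that every rank-one operator $T=y^*\otimes x$ with $x\in S_X$ and $y^*\in Y$, $\Vert y^*\Vert=1$, satisfies $\Vert I+T\Vert=1+\Vert T\Vert=2$. The standard reformulation (this is essentially \cite[Corollary 3.4]{ksw1}) translates this into a geometric statement about slices: for every $x\in S_X$, every $\sigma(X,Y)$-slice $S$ of $B_X$, and every $\varepsilon>0$, there exists $z\in S$ with $\Vert x+z\Vert>2-\varepsilon$. I would prove this one-slice version first, by testing the operator equation $\Vert I+y^*\otimes x\Vert=2$ on a suitable vector and using that $Y$ norms $X$ to locate the resulting almost-norm-attaining vector inside the prescribed slice (the slice functional plays the role of $y^*$, and one tunes the scaling so the witness lands in the slice).

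Next I would bootstrap from a single slice to a convex combination of slices. Given $x_1,\dots,x_n\in S_X$, apply the one-slice statement successively: pick $z_1$ in the first slice with $\Vert x_1+z_1\Vert>2-\varepsilon$; then, observing that the set $\{z: \Vert x_1+z\Vert>2-\varepsilon\}$ is relatively $\sigma(X,Y)$-open (it is norm-open, hence $\sigma(X,Y)$-open is too strong, so instead I would argue inductively building the witness for all $x_i$ simultaneously, which is exactly the content of the iterated slice argument in \cite{ksw1}). More precisely, the cleanest route is: prove that for finitely many $x_i$ and any $\sigma(X,Y)$-slice $S$ there is $z\in S$ with $\Vert x_i+z\Vert>2-\varepsilon$ for all $i$, by induction on $n$ — at step $k$ one has a point $z$ working for $x_1,\dots,x_{k-1}$ lying in $S$, then shrinks to a smaller slice around $z$ (still a $\sigma(X,Y)$-slice since $Y$ norms) and applies the case $n=1$ to $x_k$ on that smaller slice, noting that being close to $z$ keeps the estimates for $x_1,\dots,x_{k-1}$ intact by a triangle-inequality perturbation.

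Finally I would pass from slices to an arbitrary non-empty $\sigma(X,Y)$-open subset $U$ of $B_X$. This is exactly where Lemma~\ref{lemabourrevisi} enters: $U$ contains a convex combination $\sum_{i=1}^m \lambda_i S_i$ of $\sigma(X,Y)$-slices of $B_X$. Apply the multi-$x_i$ slice statement to \emph{each} $S_j$ with error $\varepsilon$, obtaining $z_j\in S_j$ with $\Vert x_i+z_j\Vert>2-\varepsilon$ for all $i\in\{1,\dots,n\}$ and all $j\in\{1,\dots,m\}$. Set $z=\sum_{j=1}^m\lambda_j z_j\in\sum_j\lambda_j S_j\subseteq U$. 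Then by convexity of the norm, $\Vert x_i+z\Vert=\bigl\Vert\sum_j\lambda_j(x_i+z_j)\bigr\Vert$ — wait, that is not quite $\sum_j\lambda_j\Vert x_i+z_j\Vert$; instead one uses that the segment bound goes the wrong way, so I would instead pick a norming functional $f_i\in S_Y$ (or $S_{X^*}$) with $f_i(x_i+z_{j_0})>2-\varepsilon$ for one particular index and note $f_i(x_i+z)=\lambda_{j_0}f_i(x_i+z_{j_0})+\sum_{j\ne j_0}\lambda_j f_i(x_i+z_j)\geq 2-\varepsilon$ provided $f_i(x_i+z_j)\geq 2-\varepsilon$ for \emph{all} $j$, which is why we arranged every $z_j$ to work for every $x_i$; taking $f_i$ at the outset as a common norming functional for $x_i+z_{j}$ up to $\varepsilon$ over all $j$ then gives $\Vert x_i+z\Vert\geq f_i(x_i+z)>2-\varepsilon$.

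The main obstacle is the very first step — extracting the clean ``slice'' form of the almost Daugavet property from the operator-norm definition — since one must carefully choose the test vector and rescale so that the near-maximiser of $y^*$ that the equation produces actually lies in the given slice rather than merely in $B_X$; once that localisation is in hand, the induction on the $x_i$'s and the final convexity-with-Lemma~\ref{lemabourrevisi} step are routine perturbation arguments. Since the statement only asks for a brief sketch, I would present the $n=1$ case with the rescaling trick explicitly and then say the general case follows by the iterated-slice induction of \cite{ksw1} together with Lemma~\ref{lemabourrevisi}.
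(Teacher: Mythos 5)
Your plan diverges from the paper's proof in a way that creates real gaps. The paper takes as its base case \cite[Corollary 3.4]{ksw1}, which already gives the statement for $n=1$ and an \emph{arbitrary} non-empty $\sigma(X,Y)$-open subset $U$ of $B_X$, and then runs an induction on $n$ inside the fixed set $U$; Lemma \ref{lemabourrevisi} is never needed for this lemma. In your induction over the $x_i$'s, the justification is wrong: you keep the estimates for $x_1,\dots,x_{k-1}$ because the new point is ``close to $z$'' in a smaller slice, ``by a triangle-inequality perturbation''. Membership in a $\sigma(X,Y)$-slice containing $z$ gives no norm-closeness to $z$ at all, so $\Vert x_i+u\Vert$ may drop badly. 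The correct mechanism (the paper's) is to choose $f_i\in S_Y$ with $f_i(x_i+z)>2-\varepsilon/2$ and to work inside $U\cap\bigcap_{i}S(B_X,f_i,\varepsilon/2)$; then for any $u$ in this set, $\Vert x_i+u\Vert\geq f_i(x_i)+f_i(u)>2-\varepsilon$. Note that this set is an intersection of slices with $U$, not a single slice, so your induction in fact requires the $n=1$ case for general $\sigma(X,Y)$-open sets --- which your scheme only produces at the very end, making the architecture circular unless you quote \cite[Corollary 3.4]{ksw1} in the form the paper does.

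The second, independent gap is in your final convex-combination step. From $\Vert x_i+z_j\Vert>2-\varepsilon$ for every $j$ you cannot extract ``a common norming functional $f_i$ for $x_i+z_j$ over all $j$'': the $z_j$ were chosen independently in the slices $S_j$ provided by Lemma \ref{lemabourrevisi}, and the functionals nearly norming $x_i+z_j$ may vary wildly with $j$. (In $\ell_\infty^2$, take $x=(1,1)$, $z_1=(1,-1)$, $z_2=(-1,1)$: each $\Vert x+z_j\Vert=2$, yet $\Vert x+\tfrac12(z_1+z_2)\Vert=1$.) So $\Vert x_i+\sum_j\lambda_j z_j\Vert>2-\varepsilon$ simply does not follow from what you arranged. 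The standard repair, if one insists on going through Lemma \ref{lemabourrevisi}, is to pick the $z_j$ \emph{sequentially}, applying the multi-vector slice statement to the normalized partial sums $\bigl(x_i+\sum_{j'<j}\lambda_{j'}z_{j'}\bigr)/\bigl\Vert x_i+\sum_{j'<j}\lambda_{j'}z_{j'}\bigr\Vert$ as the test vectors; that iterative argument is missing from your sketch. The paper's route avoids both issues entirely: base case $n=1$ on open sets from \cite{ksw1}, then the $f_i$-slice intersection trick for the inductive step.
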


\begin{proof}
We will prove the lemma by induction on $n$. The case $n=1$ is just \cite[Corollary 3.4]{ksw1}.

Hence, assume by induction that the lemma holds for $n$, and let us prove it for $n+1$. To this end, pick $x_1,\ldots, x_{n+1}\in S_X$, $\varepsilon>0$ and $U$ to be a non-empty $\sigma(X,Y)$-open subset of $B_X$. By induction hypothesis we can find $z\in U$ such that
$$\Vert x_i+z\Vert>2-\frac{\varepsilon}{2}$$
holds for every $i\in\{1,\ldots, n\}$. For every $i\in\{1,\ldots, n\}$ choose $f_i\in S_{Y}$ such that $f_i(x_i+z)>2-\frac{\varepsilon}{2}$. Since $z\in U$ and $f_i\in Y$, it follows that 
$$z\in W:=U\cap \bigcap\limits_{i=1}^n S(B_X,f_i,\frac{\varepsilon}{2}).$$
Since $W$ is a non-empty $\sigma(X,Y)$ open subset of $B_X$ we can find $u\in W$ such that
$$\Vert x_{n+1}+u\Vert>2-\varepsilon.$$
Also, given $i\in\{1,\ldots, n\}$,  since $f_i(x_i)>1-\frac{\varepsilon}{2}$ and $f_i(u)>1-\frac{\varepsilon}{2}$ we get that
$$\Vert x_i+u\Vert\geq f_i(x_i+u)>2-\varepsilon,$$
which concludes the proof.\end{proof}

\section{Main results}\label{section:mainresuls}

Our first goal will be to show that, in contrast with the result in \cite[Lemma 9.1]{gk}, where it is proved that octahedrality of a separable Banach space $X$ is equivalent to the existence of elements in $X^{**}$ being $L$-orthogonal to $X$, this is no longer true in the nonseparable setting. That is, the existence of almost $L$-orthogonal vectors in a Banach space $X$, as defined in the introduction, does not imply the existence of nonzero vectors in $X^{**}$ being $L$-orthogonal  to $X$. For this, we need the following result.
 
\begin{proposition}\label{prop:previocontraeje}
Let $X$ be a uniformly smooth Banach space. Let $Y$ be a Banach space and assume that either $X^*$ or $Y^*$ has the approximation property and that there exists an element $T\in (X\iten Y)^{**}=(X^*\pten Y^*)=L(X^*,Y^{**})$ such that $\Vert T\Vert=1$ and such that
$$\Vert T+S\Vert=2$$
holds for every $S\in X\iten Y=K(X^*,Y)$. Then $T$ is a linear isometry.
\end{proposition}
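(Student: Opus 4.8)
The plan is to show that $T\in L(X^*,Y^{**})$, which we know has norm $1$ and satisfies $\|T+S\|=2$ for every compact $S\colon X^*\to Y$, must be norm-attaining at every point of $S_{X^*}$ with norm-one image, and then to use the uniform smoothness of $X$ to upgrade this into global isometry. First I would fix $x^*\in S_{X^*}$ and exploit the hypothesis with carefully chosen rank-one operators $S=x^{**}\otimes y$, where $x^{**}\in S_{X^{**}}$ almost attains its norm at $x^*$ (or is taken to be a point evaluation after passing through the canonical embedding $X\hookrightarrow X^{**}$, using that rank-one operators from $X^*$ to $Y$ of the form $x\otimes y$ are compact) and $y\in S_Y$. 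The equation $\|T+S\|=2$ then forces, after evaluating at points near $x^*$, that $Ty$-type quantities line up with $T x^*$; more precisely, letting $S$ run over $x\otimes y$ with $x\to x^*$ weak-star, one gets $\|Tx^* + y\|$ close to $2$ for a suitable family of $y$'s, which says $Tx^*$ is a norm-one vector that is almost $L$-orthogonal to a large portion of $B_Y$.

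The key step is to extract from "$\|T+(x\otimes y)\|=2$ for all such $x,y$" the pointwise conclusion that $\|Tx^*\|=1$ for every $x^*\in S_{X^*}$. Here is where uniform smoothness of $X$ enters decisively: uniform smoothness of $X$ is equivalent to uniform convexity of $X^*$, so $X^*$ is uniformly convex, hence every point of $S_{X^*}$ is strongly exposed and the duality mapping $X^*\to X^{**}$ is (uniformly) single-valued and norm-to-norm continuous. This rigidity lets me convert the "almost" statements obtained by varying $x$ and $y$ into exact ones: if $\|Tx^*\|<1$ for some $x^*\in S_{X^*}$, I would derive a contradiction with $\|T+S\|=2$ by choosing $S$ adapted to the unique functional on $X^*$ exposing $x^*$ and a norming functional for $Tx^*$ in $Y^{***}$, and using a convexity/smoothness estimate to bound $\|T+S\|$ strictly below $2$. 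Once $\|Tx^*\|=1$ on all of $S_{X^*}$, linearity and boundedness give $\|Tx^*\|=\|x^*\|$ everywhere, i.e. $T$ is an isometry.

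I expect the main obstacle to be the bookkeeping needed to legitimately use rank-one operators $x\otimes y$ as the test operators $S$: one must check these genuinely lie in $K(X^*,Y)=X\iten Y$ (they do, being rank one, provided $x$ is taken in $X$ rather than $X^{**}$), and one must be careful that the bidual identifications $(X\iten Y)^{**}=X^*\pten Y^*=L(X^*,Y^{**})$ — valid under the approximation-property hypothesis on $X^*$ or $Y^*$ — are used consistently so that the pairing $\langle T, S\rangle$ and the operator norm $\|T+S\|$ really compute what the notation suggests. The analytic heart, converting $\|Tx^*+y\|\approx 2$ into $\|Tx^*\|=1$, is then a clean application of uniform convexity of $X^*$ together with the elementary fact that in any Banach space $Z$, if $u\in B_Z$ and $\sup_{v\in B_Z}\|u+v\|=2$ then actually $\|u\|=1$ — the subtlety is only that here the supremum is over the restricted family coming from admissible $S$, which is where the smoothness of $X$ is again needed to see that this family is rich enough.
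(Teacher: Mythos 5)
Your proposal follows essentially the same route as the paper: test $T$ against rank-one operators $S=x\otimes y$ with $x\in S_X$ norming $x^*$ (norm attainment is available since uniform smoothness gives reflexivity), observe that near-norming functionals $x_n^*$ for $T+S$ satisfy $\Vert T(x_n^*)\Vert\to 1$ and $\vert x_n^*(x)\vert\to 1$, and then use uniform convexity of $X^*$ (the paper cites Fact 9.5 of Fabian et al., you invoke strong exposedness of $x^*$ by $x$ -- the same fact) to force $x_n^*\to\pm x^*$ in norm and conclude $\Vert T(x^*)\Vert=1$ by continuity. Apart from minor imprecisions in your sketch (e.g.\ ``$x\to x^*$ weak-star'' conflates elements of $X$ and $X^*$, and your contrapositive phrasing is just the direct limit argument of the paper), the strategy and the decisive use of uniform convexity coincide with the paper's proof.
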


\begin{proof}
Let $x^*\in S_{X^*}$ and $y\in S_Y$. Let us prove that $\Vert T(x^*)\Vert=1$. To this end, define $S:=x\otimes y\in X\iten Y$, where $x\in S_X$ satisfies that $x^*(x)=1$. By assumptions $\Vert T+S\Vert=2$. Consequently, for every $n\in\mathbb N$, there exists $x_n^*\in S_{X^*}$ such that
$$2-\frac{1}{n}<\Vert T(x_n^*)+S(x_n^*)\Vert\leq \Vert T(x_n^*)\Vert+\Vert S(x_n^*)\Vert=\Vert T(x_n^*)\Vert+\vert x_n^*(x)\vert.$$
The previous estimate implies that $\Vert T(x_n^*)\Vert\rightarrow 1$ and $\vert x_n^*(x)\vert\rightarrow 1$. This implies that, up taking a subsequence, either $x_n^*(x)\rightarrow 1$ or $x_n^*(x)\rightarrow -1$. Assume, up a change of sign, that $x_n^*(x)\rightarrow 1$. This implies that $\Vert x_n^*+x\Vert\rightarrow 2$ by evaluating the sequence at the point $x$. Then \cite[Fact 9.5]{checos} implies that $\Vert x_n^*-x^*\Vert\rightarrow 0$. Now, since $T(x_n^*)\rightarrow T(x^*)$ by continuity then $\Vert T(x_n^*)\Vert\rightarrow \Vert T(x^*)\Vert$. Consequently, $\Vert T(x^*)\Vert=1$. The arbitrariness of $x^*$ implies that $T$ is an isometry.\end{proof}

The previous lemma together with \cite[Theorem 3.2]{lr} yield a large class of counterexamples.

\begin{theorem}\label{octahedralnonorthogonal}
Let $X$ be a Banach space whose norm is octahedral. Let $I$ be an infinite set with $\card(I)>\dens(X^{**})$ and let $2<p<\infty$. Then the norm of $\ell_p(I)\iten X$ is octahedral but there is no $T\in (\ell_p(I)\iten X)^{**}$ such that $\Vert T\Vert=1$ and such that
$$\Vert T+S\Vert=1+\Vert S\Vert$$
 for every $S\in \ell_p(I)\iten X$.
\end{theorem}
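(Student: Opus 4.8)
The plan is to prove the two assertions of Theorem~\ref{octahedralnonorthogonal} separately, relying on the previously stated machinery.

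\textbf{Octahedrality of $\ell_p(I)\iten X$.} First I would observe that octahedrality passes from $X$ to $\ell_p(I)\iten X$. The cleanest route is via ai-ideals: fix a finite $J\subseteq I$ and note that $\ell_p(J)\iten X$ is isometric to the $\ell_p$-sum of finitely many copies of $X$; since a finite $\ell_p$-sum with $p<\infty$ of octahedral spaces is octahedral (the ``long James'' / standard sum argument, or simply quote \cite[Theorem 3.2]{lr}, which is cited as supplying exactly such examples), and since $\ell_p(J)\iten X$ is an ideal — indeed an ai-ideal — in $\ell_p(I)\iten X$, octahedrality of the big space follows from the fact, recalled in the Preliminaries, that octahedrality is inherited by (and in fact stable under passing to super-spaces of) ai-ideals. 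Alternatively one checks octahedrality directly: given finitely many $S_1,\dots,S_n\in S_{\ell_p(I)\iten X}$, approximate each by a finitely-supported element, pick a coordinate $i_0\in I$ outside all supports, take a witness $x\in B_X$ of octahedrality of $X$ for the relevant finite set of ``slices'' of $X$, and let $S=e_{i_0}\otimes x$; the $\ell_p$-nature of the outer norm gives $\|S+S_j\|^p\gtrsim 1+(2-\varepsilon)^p$, which for $p$ large forces $\|S+S_j\|$ close to $2$. I expect either argument to be routine; I would present the ideal-based one as it is shortest.

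\textbf{Non-existence of an $L$-orthogonal element.} This is the heart of the matter and where Proposition~\ref{prop:previocontraeje} enters. Suppose, for contradiction, that $T\in(\ell_p(I)\iten X)^{**}$ has $\|T\|=1$ and $\|T+S\|=1+\|S\|$ for all $S\in\ell_p(I)\iten X$. The space $E:=\ell_p(I)$ is uniformly smooth (here $2<p<\infty$), and we have the identifications $(\ell_p(I)\iten X)^{**}$, the approximation property hypothesis ensuring $(\ell_p(I)\iten X)^{**}=\ell_p(I)^{**}\pten X^{**}=\ldots=L(\ell_q(I),X^{**})$ with $1/p+1/q=1$ — this matches the setup of Proposition~\ref{prop:previocontraeje} with the roles ``$X$''$=\ell_p(I)$, ``$Y$''$=X$ (note $\ell_p(I)^*=\ell_q(I)$ has the approximation property, so the hypothesis ``either $X^*$ or $Y^*$ has AP'' is met). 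Proposition~\ref{prop:previocontraeje} then yields that $T:\ell_q(I)\to X^{**}$ is a linear isometry. But now count densities: an isometric embedding forces $\dens(\ell_q(I))\le\dens(X^{**})$, i.e. $\card(I)\le\dens(X^{**})$, contradicting the hypothesis $\card(I)>\dens(X^{**})$. Hence no such $T$ exists.

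\textbf{Main obstacle.} The only delicate point is making sure the chain of identifications $(\ell_p(I)\iten X)^{**}\cong(\ell_p(I)\iten X)^{**}=(\ell_q(I)\pten X^*)^*=\ldots=L(\ell_q(I),X^{**})$ is genuinely isometric under the stated approximation-property hypothesis, and that ``$\|T+S\|=1+\|S\|$ for all $S\in\ell_p(I)\iten X=K(\ell_q(I),X)$'' is exactly the hypothesis ``$\|T+S\|=2$ for all $S\in X\iten Y$'' needed in Proposition~\ref{prop:previocontraeje} (which is stated for $\|S\|\le1$; one just applies it to $S/\|S\|$, or notes that $1+\|S\|\le\|T+S\|\le 1+\|S\|$ is automatic once $\|T+S/\|S\|\,\|=2$). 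Once these bookkeeping identifications are in place — and they are precisely the content recalled in the Preliminaries about $X^{**}\pten Y$ sitting isometrically in $(X\pten Y)^{**}$ when one factor has the metric/ approximation property, dualized through $\iten$ — the density-character contradiction closes the argument immediately. I would therefore devote most of the write-up to stating these identifications carefully and verifying uniform smoothness of $\ell_p(I)$, then invoke Proposition~\ref{prop:previocontraeje} and conclude.
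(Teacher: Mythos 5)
Your second half, the non-existence of an $L$-orthogonal element, is correct and coincides with the paper's own argument: identify $(\ell_p(I)\iten X)^{**}$ with $L(\ell_q(I),X^{**})$ (legitimate since $\ell_q(I)$ has the AP and the RNP), apply Proposition~\ref{prop:previocontraeje} with the roles ``$X$''$=\ell_p(I)$ (uniformly smooth for $2<p<\infty$, and $\ell_p(I)^*=\ell_q(I)$ has the AP) and ``$Y$''$=X$, and reach a contradiction because an isometry $\ell_q(I)\to X^{**}$ would force $\card(I)=\dens(\ell_q(I))\leq\dens(X^{**})$. Your handling of the homogeneity bookkeeping is fine.

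The octahedrality half, however, contains a genuine gap. The identification ``$\ell_p(J)\iten X$ is the $\ell_p$-sum of $\card(J)$ copies of $X$'' is false: $\ell_p(J)\iten X$ carries the operator norm of $L(\ell_q(J),X)$, not the $\ell_p(J;X)$-norm (the identification you have in mind holds for $c_0\iten X=c_0(X)$ and $\ell_1\pten X=\ell_1(X)$, but for $1<p<\infty$ the $\ell_p$-sum is neither the injective nor the projective tensor norm). Worse, the stability claim itself is false: for $1<p<\infty$ an $\ell_p$-sum $X\oplus_p Y$ of two nonzero spaces is \emph{never} octahedral, since for unit vectors $z_1=(x,0)$, $z_2=(0,y)$ and $w=(u,v)\in S_{X\oplus_p Y}$ one has $\Vert z_1+w\Vert\leq((1+\Vert u\Vert)^p+\Vert v\Vert^p)^{1/p}$, which is close to $2$ only if $\Vert u\Vert$ is close to $1$, i.e. $\Vert v\Vert$ close to $0$, and symmetrically for $z_2$; so no $w$ works for both. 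In particular \cite[Theorem 3.2]{lr} does not say what you attribute to it. Your ``alternative'' direct argument fails for the same reason: if $S_j$ is (approximately) supported on coordinates in a finite set $F$ and $S=e_{i_0}\otimes x$ with $i_0\notin F$, then, viewing everything as operators on $\ell_q(I)$, $\Vert (S+S_j)(f)\Vert\leq |f_{i_0}|+\Vert f\Vert_{\ell_q(F)}\leq 2^{1/p}$ for every $f\in B_{\ell_q(I)}$, hence $\Vert S+S_j\Vert\leq 2^{1/p}<2$; disjoint supports are exactly the wrong move in the injective tensor product with $\ell_p$, and ``for $p$ large'' is unavailable since $p$ is a fixed parameter. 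The correct (and the paper's) route is to apply \cite[Theorem 3.2]{lr} directly to $\ell_p(I)\iten X$: since $2<p<\infty$, the dual $\ell_q(I)$ ($1<q<2$) is finitely representable in $\ell_1$ and has the MAP, and that theorem transfers octahedrality from $X$ to $\ell_p(I)\iten X$; the witnesses live on the $X$-side of the tensor, not on fresh $\ell_p$-coordinates.
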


\begin{proof}
Since $2<p<\infty$ it follows that $\ell_{q}(I)$ is finitely representable in $\ell_1$, where $\frac{1}{p}+\frac{1}{q}=1$, and has the MAP. By \cite[Theorem 3.2]{lr} it follows that the norm of $\ell_p(I)\iten X$ is octahedral. However, notice that there is no isometry $T:\ell_q(I)\longrightarrow X^{**}$ since $\dens(\ell_q(I))\geq \card(I)>\dens(\ell_1^{**})$: According to Proposition \ref{prop:previocontraeje}, there is no $T\in S_{(\ell_p(I)\iten X)^{**}}$ such that $\Vert T+S\Vert=2$ holds for every $S\in S_{\ell_p(I)\iten X}$, so we are done.
\end{proof}

Now, our goal will be to get nonzero vectors in the bidual of a Banach space $X$ being $L$-orthogonal to $X$ from the existence of almost $L$-orthogonal vectors in $X$. Let us show  another central result of the paper.

\begin{theorem}\label{theo:almostdp}
Let $X$ be a Banach space with the almost Daugavet property with respect to the norming subspace $Y\subseteq X^*$. Let $u\in B_{X^{**}}$. Then, for every almost isometric ideal $Z$ in $X$ with $\dens(Z)=\omega_1$ and for every $\{g_n:n\in\mathbb N\}\subseteq S_Y$ such that $g_n\in \varphi(Z^*)$ for every $n\in\mathbb N$,  we can find $v\in S_{X^{**}}$ satisfying the following two assertions:
\begin{enumerate}
\item $\Vert x+v\Vert=1+\Vert x\Vert$  for every $x\in Z$.
\item $v(g_n)=u(g_n)$  for every $n\in\mathbb N$.
\end{enumerate}
\end{theorem}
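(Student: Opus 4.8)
The plan is to build $v$ as a limit (in a suitable sense) of an $\omega_1$-long transfinite construction, exhausting a dense subset of $Z$ while simultaneously controlling the values on the countable set $\{g_n\}$. Write $\dens(Z)=\omega_1$ and fix a dense subset $\{z_\alpha:\alpha<\omega_1\}$ of $B_Z$. I would like, at stage $\alpha$, to produce a vector $v_\alpha\in S_{X^{**}}$ (or in $S_X$, moved up to $X^{**}$ via the ideal structure) that is already almost $L$-orthogonal to $z_\beta$ for all $\beta<\alpha$ and that has $v_\alpha(g_n)$ close to $u(g_n)$ for the first $\alpha$-many $n$'s — but since there are only countably many $g_n$, I will insist on $v_\alpha(g_n)=u(g_n)$ for \emph{all} $n$ once $\alpha$ is infinite. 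The point where the argument must be set up carefully is that "almost $L$-orthogonal to $z_\beta$ for all $\beta<\alpha$" involves $\alpha$-many (hence possibly uncountably many) constraints, so one cannot just apply Lemma~\ref{lemma:tecniADP} at a successor step; instead one works with the finitely many $z_\beta$ appearing in a given finite-dimensional subspace, and uses that the set of $x\in X$ that are almost $L$-orthogonal to a fixed finite set, and sit in a prescribed $\sigma(X,Y)$-slice, is nonempty by the almost Daugavet property.

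Concretely, I would proceed as follows. First, using Theorem~\ref{theo:exteaiideales} iteratively (or the hypothesis directly), one has the almost-isometric Hahn–Banach extension operator $\varphi:Z^*\to X^*$ with $\varphi(Z^*)\supseteq\{g_n:n\in\mathbb N\}$, and the adjoint $\varphi^*:X^{**}\to Z^{**}$ is a norm-one projection; a vector $v\in S_{X^{**}}$ will be $L$-orthogonal to $Z$ precisely when its image behaves correctly against the norming set $\varphi(Z^*)$, so the plan is to force, for every $x\in Z$ and every $\varepsilon>0$, a functional $f\in S_{\varphi(Z^*)}$ with $f(x)>\|x\|-\varepsilon$ and $f(v)>1-\varepsilon$; then $\|x+v\|\geq f(x+v)>\|x\|+1-2\varepsilon$, giving (1). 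Thus the real object to construct is not $v$ itself but a coherent family of such "witnessing functionals," or equivalently a point $v$ in the $\sigma(X^{**},\varphi(Z^*))$-closure of an appropriate filtered family of sets. I would phrase the induction so that at stage $\alpha$ I have $v_\alpha\in B_X$ and functionals $f_{\beta,k}\in S_Y$ (for $\beta<\alpha$, $k\in\mathbb N$) with $f_{\beta,k}(z_\beta)>\|z_\beta\|-1/k$ and $f_{\beta,k}(v_\gamma)>1-1/k$ for all $\gamma$ with $\beta<\gamma\le\alpha$, together with $v_\alpha(g_n)=u(g_n)$ for all $n$; the successor step chooses $v_{\alpha+1}$ inside $U=\bigcap\{S(B_X,f_{\beta,k},\delta)\}\cap\{x:x(g_n)=u(g_n)\,\forall n\le m\}$ — a nonempty $\sigma(X,Y)$-slice-type set, nonempty by Lemma~\ref{lemabourrevisi}/the almost Daugavet property together with the $\varepsilon$-flatness already recorded — so that $\|v_{\alpha+1}+z_\beta\|>\|z_\beta\|+1-\varepsilon$ for the finitely many $z_\beta$ that matter at this level, and then reads off new witnessing functionals.

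The limit ordinal steps and the final extraction are where $\dens(Z)=\omega_1$ is essential: a $\sigma(X,Y)$-accumulation point of the transfinite sequence $(v_\alpha)$ lives in $B_{X^{**}}$, and because at each of the $\omega_1$ stages we only had to satisfy countably many new constraints while the \emph{total} family of constraints is indexed by $\omega_1$, a diagonal/cofinality argument produces a single $v\in B_{X^{**}}$ satisfying \emph{all} the $f_{\beta,k}(v)\ge 1-1/k$ and $v(g_n)=u(g_n)$. Then $\|v\|=1$ follows from $f_{\beta,k}(v)\to1$, and (1), (2) hold by the discussion above. The main obstacle, and the step I expect to demand the most care, is exactly the compatibility at limit stages of uncountable cofinality and the final selection: one must arrange the bookkeeping so that every pair $(z_\beta,1/k)$ is "activated" cofinally often yet only countably many are active below any given $\alpha$, so that the almost Daugavet property (which handles only finitely many directions at once, via Lemma~\ref{lemma:tecniADP}) suffices at each successor step, and so that the single limiting functional-values can be achieved simultaneously — this is the precise point where separability of the witnessing data and the value $\omega_1$ of the density character are used, and where a naive construction for larger density characters breaks down, consistently with Example~\ref{example:w2false}.
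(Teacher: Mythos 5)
Your outer skeleton --- a transfinite induction of length $\omega_1$ over a dense subset of $Z$, witnessing functionals recorded along the way whose action on all later iterates is controlled, and a final $w^*$-cluster point --- is indeed the shape of the paper's argument. But the successor step, which is where the whole difficulty sits, does not work as you describe. At stage $\alpha+1$ your point $v_{\alpha+1}\in B_X$ must simultaneously re-validate \emph{all} previously recorded constraints $f_{\beta,k}(v_{\alpha+1})>1-1/k$ ($\beta\leq\alpha$, $k\in\mathbb N$): these are countably many slice conditions whose widths shrink to $0$, so the set $U$ you describe is a countable, non-open intersection of slices. It does contain $v_\alpha$, but the almost Daugavet property, Lemma \ref{lemma:tecniADP} and Lemma \ref{lemabourrevisi} operate only on nonempty relatively $\sigma(X,Y)$-\emph{open} subsets of $B_X$, i.e.\ on finitely many slice conditions at a time; nothing forces almost-$L$-orthogonal points (or any point besides $v_\alpha$ itself) to exist in such a $G_\delta$-type set. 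Your suggested escape --- activating each pair $(z_\beta,1/k)$ only cofinally often --- is incompatible with the final extraction: a $w^*$-cluster point of $(v_\alpha)_{\alpha<\omega_1}$ inherits a $w^*$-closed condition only if it holds on a tail, not merely on a cofinal set, and since the norm is only $w^*$-lower semicontinuous you cannot recover $\Vert z_\beta+v\Vert\geq 2-\varepsilon$ without witnessing functionals whose lower bounds are maintained eventually. A smaller but real inaccuracy: you impose \emph{exact} equalities $v_\alpha(g_n)=u(g_n)$ with $v_\alpha\in B_X$; since $u\in B_{X^{**}}$, Goldstine gives only approximation, and $\{x\in B_X: x(g_n)=u(g_n)\}$ can be empty (e.g.\ when $\vert u(g_n)\vert=1$ and $g_n$ does not attain its norm).

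The ingredient missing from your plan, and the paper's way around the successor-step obstruction, is a separable intermediate result (Lemma \ref{lemma:adpseparable}): for any \emph{separable} almost isometric ideal $Z_\beta$ in $X$ and any countable family of functionals in $\varphi_\beta(Z_\beta^*)\cap S_Y$, one can produce a \emph{bidual} element $v_\beta\in S_{X^{**}}$ that is exactly $L$-orthogonal to all of $Z_\beta$ and exactly matches a prescribed element of $B_{X^{**}}$ on those countably many functionals; this is achieved by a countable limiting procedure (a basis of the ball topology of $B_{Z_\beta}$, the local $(1+\delta)$-isometries coming from the ai-ideal structure, and \cite[Lemma 9.1]{gk}), not by a single application of the almost Daugavet property. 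The $\omega_1$-induction is then run over an increasing chain of separable ai-ideals supplied by Theorem \ref{theo:exteaiideales}, applying the separable lemma at each stage with target a $w^*$-cluster point of the earlier $v_\gamma$'s, so that the countably many previously recorded functional values (the families $f_{\gamma,\delta}$ norming $Z_\gamma\oplus\mathbb R v_\gamma$, together with the $g_n$) are preserved \emph{exactly} on a tail; this coherence is what makes the final cluster point $L$-orthogonal to all of $Z$. Without this device, or some substitute that meets countably many constraints exactly at each stage by passing to bidual vectors, the construction you outline stalls at the successor step.
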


For the proof we will need the following lemma, which establishes the separable case.

\begin{lemma}\label{lemma:adpseparable}
Let $X$ be a Banach space with the almost Daugavet property with respect to the norming subspace $Y\subseteq X^*$. Let $u\in B_{X^{**}}$. Then, for every separable almost isometric ideal $Z$ in $X$ and for every $\{g_n:n\in\mathbb N\}\subseteq S_Y$ such that $g_n\in \varphi(Z^*)$ for every $n\in\mathbb N$,  we can find $v\in S_{X^{**}}$ satisfying the following two assertions:
\begin{enumerate}
\item $\Vert x+v\Vert=1+\Vert x\Vert$  for every $x\in Z$.
\item $v(g_n)=u(g_n)$  for every $n\in\mathbb N$.
\end{enumerate}
\end{lemma}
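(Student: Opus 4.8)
The plan is to build $v$ as a weak-star cluster point of a sequence $(z_k)$ in $B_X$ constructed inductively so that each $z_k$ simultaneously (a) stays inside a prescribed $\sigma(X,Y)$-open set that forces the limit to behave like an $L$-orthogonal element on a dense set of $Z$, and (b) lies in a slice pinning down the values $g_n(\cdot)$ to approximately $u(g_n)$. First I would fix a countable dense subset $\{x_m : m\in\mathbb N\}$ of $S_Z$. The key observation is that Theorem \ref{theo:hbaioper} gives us, for the separable ai-ideal $Z$, an almost-isometric Hahn--Banach extension operator $\varphi:Z^*\to X^*$ with $\varphi(Z^*)\supseteq\{g_n\}$, and hence $P:=\varphi^*:X^{**}\to Z^{**}$ is a norm-one projection; this is what will let us control the behaviour of the limit on $Z$ through the dual functionals that $\varphi$ produces. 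Enumerate a countable dense set $\{h_j\}$ of $S_{Z^*}$ and set $\tilde h_j := \varphi(h_j)\in S_Y$ (after checking $\varphi(Z^*)\subseteq Y$ up to the identification, which follows since $\varphi$ is the Hahn--Banach extension compatible with the given norming subspace — this compatibility is exactly the role of the hypothesis $g_n\in\varphi(Z^*)\subseteq S_Y$, and one arranges $\varphi(Z^*)\subseteq Y$ by invoking Theorem \ref{theo:exteaiideales} with $W$ containing a countable norming subset of $Z^*$ sitting inside $Y$).

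The inductive step is the heart of the argument. Having chosen $z_1,\dots,z_{k-1}$, I would apply Lemma \ref{lemma:tecniADP} to the finite family $x_1,\dots,x_k\in S_X$, to $\varepsilon=2^{-k}$, and to the non-empty $\sigma(X,Y)$-open set
$$U_k := \bigl\{\, z\in B_X : |g_n(z) - u(g_n)| < 2^{-k}\ \text{for } n\le k,\ |\tilde h_j(z) - \tilde h_j(z_{k-1})|<2^{-k}\ \text{for } j\le k \,\bigr\},$$
which is non-empty because $z_{k-1}$ lies in it (the first batch of constraints are satisfied by $z_{k-1}$ by the previous step, using that the $\varepsilon$'s shrink), obtaining $z_k\in U_k$ with $\|x_i+z_k\|>2-2^{-k}$ for all $i\le k$. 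Now let $v$ be any $\sigma(X^{**},Y)$-cluster point of $(z_k)$ in $B_{X^{**}}$. Condition (2) is immediate: $v(g_n)=\lim_k g_n(z_k)=u(g_n)$. For condition (1): given $x\in Z$, we may assume $\|x\|=1$ and approximate it by $x_m$; from $\|x_m+z_k\|>2-2^{-k}$ for $k\ge m$ one gets, passing to the cluster point and using weak-star lower semicontinuity of the norm together with a functional $f_m\in S_Y$ nearly norming $x_m+z_k$, that $\|x_m+v\|\ge 2-o(1)$, hence $\|x+v\|=2$, i.e. $\|x+v\|=1+\|x\|$; homogeneity then gives it for all $x\in Z$. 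The Cauchy-type constraints $|\tilde h_j(z_k)-\tilde h_j(z_{k-1})|<2^{-k}$ ensure that $(h_j(Pv))$ is well-approximated, so that actually $\|v\|=1$ (one also checks $v\ne 0$ directly, e.g. from $\|x+v\|=2$), giving $v\in S_{X^{**}}$.

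The main obstacle I anticipate is the bookkeeping that makes condition (1) hold for \emph{all} $x\in Z$ and not merely on the dense set $\{x_m\}$, while simultaneously keeping $v$ on the sphere and respecting the values at the $g_n$. The delicate point is that "$\|x_m+z_k\|$ close to $2$" must be upgraded to "$\|x_m+v\|=2$" in the bidual, which requires choosing the norming functionals $f_{m,k}\in S_Y$ and carrying them (or a subsequential limit of them) along so that $f_{m}(v)$ is close to $1$; this is precisely why one works with $Y$ and with the extension operator $\varphi$, and why the $\sigma(X,Y)$ topology rather than the weak topology is used throughout. A clean way to handle this is to note that $z\mapsto \|x_m+z\|$ is $\sigma(X^{**},Y)$-lower semicontinuous on $B_{X^{**}}$ (as a supremum of functionals from $Y$), so the cluster point inherits $\|x_m+v\|\ge 2-2^{-m}$ automatically from the tail of the sequence, and then density of $\{x_m\}$ in $S_Z$ closes the gap. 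I expect the separability of $Z$ to be used exactly here — it is what makes the required sets $\{x_m\}$ and $\{h_j\}$ countable so that a single diagonal sequence can meet infinitely many constraints.
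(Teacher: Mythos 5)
There are two genuine gaps. First, your sets $U_k$ are not $\sigma(X,Y)$-open: the functionals $\tilde h_j=\varphi(h_j)$, with $h_j$ running through a dense subset of $S_{Z^*}$, have no reason to belong to $Y$. The hypothesis only places the countably many $g_n$ in $\varphi(Z^*)\cap S_Y$, and Theorem \ref{theo:exteaiideales} gives an inclusion in the opposite direction ($\varphi(Z^*)\supseteq W$, never $\varphi(Z^*)\subseteq Y$), so the fix you suggest is not available. Since the almost Daugavet property is assumed only with respect to $Y$, Lemma \ref{lemma:tecniADP} cannot be applied to sets cut out by functionals outside $Y$, and without those constraints you have no control of the cluster point on $Z$ beyond the $g_n$'s. (Also, $z_{k-1}\in U_k$ fails as stated: $z_{k-1}$ meets the $g_n$-constraints only with error $2^{-(k-1)}$, and nothing is known at the new index $n=k$; nonemptiness should instead come from Goldstine, but then the two batches of constraints in $U_k$, one anchored at $u$ and one at $z_{k-1}$, need a separate compatibility argument.)

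Second, and more seriously, the step upgrading $\Vert x_m+z_k\Vert>2-2^{-k}$ to $\Vert x_m+v\Vert\ge 2-2^{-m}$ at the cluster point is incorrect. Lower semicontinuity of $z\mapsto\Vert x_m+z\Vert$ yields $\Vert x_m+v\Vert\le\liminf_\alpha\Vert x_m+z_{k_\alpha}\Vert$ along the convergent subnet, i.e.\ the useless direction; equivalently, it says that $\{z:\Vert x_m+z\Vert>c\}$ is open, and membership in an open set does not pass to cluster points (a weak-star null perturbation of $-x_m$ stays at distance $1$ from $-x_m$ while its limit does not). The device you allude to but do not implement -- carrying functionals $f\in S_Y$ that nearly norm the already achieved sums $x_i+z_j$ and forcing all later points into the slices $\{f>1-\varepsilon\}$, so that evaluation at the fixed $f$ survives the passage to the cluster point -- is the right one, but then nonemptiness of the resulting intersections must be proved, which is exactly where the difficulty sits. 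The paper's proof circumvents all of this: it chooses the points $x_n$ inside the lifted basic ball-topology sets $\tilde O_k$ of the separable space $Z$ together with the $g_k$-constraints, uses the local operators $T$ of Theorem \ref{theo:hbaioper} (whose property (3) preserves the $g_k$-values) to push the points into $B_Z$, invokes \cite[Lemma 9.1]{gk} inside $Z$ to obtain a suitable $w^*$-cluster point in $S_{Z^{**}}$ which is $L$-orthogonal to $Z$, and lifts it back via the norm-one projection $\varphi^*$. Your sketch uses neither the ball topology nor the transfer into $Z$, and as written it does not establish assertion (1).
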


\begin{proof}
Let $\{g_n:n\in\mathbb N\}\subseteq S_Y$ and let $Z$ be a separable almost isometric ideal in $X$ and $\varphi:Z^*\longrightarrow X^*$ such that $\{g_n:n\in\mathbb N\}\subseteq \varphi(Z^*)$. Let us construct $v$. To this end, since $Z$ is separable, there exists a basis $\{O_n:n\in\mathbb N\}$ of the $b_Z$-topology restricted to $B_Z$. For every $n\in\mathbb N$ consider $\tilde{O_n}$ to be the $b_X$-open subset of $B_X$ which defines $O_n$ (i.e. if $O_n:=\bigcap\limits_{i=1}^{k_n} B_Z\setminus B(z^n_i,r_i)$ then $\tilde O_n:=\bigcap\limits_{i=1}^{k_n} B_X\setminus B(z^n_i,r_i)$). Since $X$ has the Daugavet property with respect to $Y$ it follows that, for every $n\in\mathbb N$, there exists by Lemma \ref{lemma:tecniADP} an element
$$x_n\in \bigcap\limits_{k=1}^n \tilde{O_k}\cap \bigcap\limits_{k=1}^n \left\{x\in B_X: \vert g_k(x)-u(g_k)\vert<\frac{1}{n} \right\}.$$
Now, for every $\delta>0$, there exists a $\delta$-isometry $$T:E:=\spann\{z_1,\ldots, z_{k_n}, x_n\}\longrightarrow Z$$ such that $T(z_i)=z_i$ and that
$g_k(T(v))=\varphi(g_k)(v)$
holds for every $v\in E$ and every $k\in\{1,\ldots, n\}$. Taking into account the property defining $x_n$ and the fact that $\delta$ can be taken as small as we wish we can ensure the existence of 
$$z_n\in\bigcap\limits_{k=1}^n O_k\cap \bigcap\limits_{k=1}^n \left\{z\in B_Z: \vert \varphi^{-1}(g_k)(z)-u(g_k)\vert<\frac{1}{n} \right\}.$$
Now \cite[Lemma 9.1]{gk} ensures the existence of a suitable $w^*$-cluster point $u\in S_{Z^{**}}$ of $\{z_n\}$ such that
$$\Vert z+u\Vert=1+\Vert z\Vert$$
holds for every $z\in Z$. If we take $v\in(\varphi^*)^{-1}(u)$ then we have that
$$\Vert x+v\Vert\geq \Vert \varphi^*(x+v)\Vert=\Vert x+u\Vert=1+\Vert x\Vert$$
holds for every $x\in Z$. Also, it is clear, by definition of the sequence $\{x_n\}$ and the fact that $u$ is a $w^*$-cluster point, that $v(g_k)=u(g_k)$ holds for every $k\in\mathbb N$.
\end{proof}

\begin{proof}[Proof of Theorem \ref{theo:almostdp}]

Let $Z$ be an almost isometric ideal in $X$ of density character equal to $\omega_1$ and let $\varphi:Z^*\longrightarrow X^*$ be a almost isometric Hahn-Banach extension operator such that $\{g_n:n\in\mathbb N\}\subseteq \varphi(Z^*)\cap S_Y$. In order to construct $v$, pick $\{x_\beta:\beta< \omega_1\}\subseteq S_X$ to be a dense subset of $S_Z$. Let us construct by transfinite induction on $\omega_0\leq \beta<\omega_1$ a family $\{(Z_\beta,\varphi_\beta, \{f_{\beta,\gamma}: \gamma<\beta\}, v_\beta): \beta<\omega_1\}$ satisfying the following assertions:
\begin{enumerate}
\item $Z_\beta$ is a separable almost isometric ideal in $X$ containing $\bigcup\limits_{\gamma<\beta}Z_\gamma \cup\{x_\beta\}$ and $\{x_n:n\in\mathbb N\}\subseteq Z_{\omega_0}$.

\item $\varphi_\beta: Z_\beta^*\longrightarrow X^*$ is an almost isometric Hahn-Banach operator such that $\{f_{\gamma,\delta}: \delta<\gamma<\beta\}\cup\{g_n:n\in\mathbb N\}\subseteq \varphi_\beta(Z_\beta^*)$.

\item $v_\beta\in S_{X^{**}}$ satisfies that
$$\Vert z+v_\beta\Vert=1+\Vert z\Vert$$
 for every $z\in Z_\beta$ and $\{f_{\beta,\gamma}: \gamma <\beta\}\subseteq S_Y$ is norming for $Z_\beta\oplus\mathbb Rv_\beta$.
\item For every $\delta<\gamma<\beta<\omega_1$ it follows
$$v_\beta(f_{\gamma,\delta})=v_\gamma(f_{\gamma,\delta}),$$
and
$$v_\beta(g_n)=u(g_n)$$
holds for every $n\in\mathbb N$.
\end{enumerate}
The construction of the family will be completed by transfinite induction on $\beta$. To this end, notice that the case $\beta=\omega_0$ follows from Lemma \ref{lemma:adpseparable}. So, assume that $(Z_\gamma,\varphi_\gamma, \{f_{\gamma,\delta}:\delta\in\gamma\}, v_\gamma)$ has already been constructed for every $\gamma<\beta$, and let us construct $(Z_\beta, \varphi_\beta, \{f_{\beta,\gamma}: \gamma\in\beta\}, v_\beta)$.  Pick $v$ to be a $w^*$-cluster point of the net $\{v_\gamma: \gamma<\beta\}$ (where the order in $[0,\beta[$ is the classical order). Notice that, by induction hypothesis, for every $\delta_0<\gamma_0<\gamma<\beta$ we have that
$$v_\gamma(f_{\gamma_0,\delta_0})=v_{\gamma_0}(f_{\gamma_0,\delta_0}).$$
 Then, since $v$ is a $w^*$-cluster point of $\{v_\gamma\}_{\gamma<\beta}$, we get that
\begin{equation}\label{ecuacondicadenna}
v(f_{\gamma_0,\delta_0})=v_{\gamma_0}(f_{\gamma_0,\delta_0}).
\end{equation}
Because of the same reason, given $n\in\mathbb N$, we obtain that

\begin{equation}\label{ecuaccondiiguau}
v(g_n)=v_{\omega_0}(g_n)=u(g_n).
\end{equation} 
Now notice that the set
$$\{f_{\gamma,\delta}: \delta<\gamma<\beta\}\cup\{g_n:n\in\mathbb N\}$$
is countable because $\beta$ is a countable ordinal. Also, $\bigcup\limits_{\gamma<\beta}
Z_\gamma$ is separable. Then, by \cite[Theorem 1.5]{abrahamsen} there exists an almost isometric ideal $Z_\beta$ in $X$ containing $\bigcup\limits_{\gamma<\beta} Z_\gamma \cup\{x_\beta\}$ and an almost isometric Hahn-Banach extension operator $\varphi_\beta: Z_\beta^*\longrightarrow X^*$ such that 
$$\varphi_\beta(Z_\beta^*)\supset \{f_{\gamma,\delta}: \delta<\gamma<\beta\}\cup\{g_n: n\in\mathbb N\}.$$
Let us construct $v_\beta$. To this end, since $Z_\beta$ is separable then Lemma \ref{lemma:adpseparable} applies for $v\in B_{X^{**}}$. Consequently, we can find $v_\beta\in S_{X^{**}}$ such that
\begin{enumerate}
\item $\Vert z+v_\beta\Vert=1+\Vert x\Vert$  for every $x\in Z_\beta$, and 
\item $v_\beta(f_{\gamma,\delta})=v(f_{\gamma,\delta})$  for $\delta<\gamma<\beta$, and $v_\beta(g_n)=v(g_n)$  for every $n\in\mathbb N$.
\end{enumerate}
Take $\{f_{\beta,\gamma}: \gamma<\beta\}\subseteq S_{Y}$ being norming for $Z_\beta\oplus\mathbb R v_\beta$. It follows as before that $\{(Z_\gamma,\varphi_\gamma, \{f_{\gamma,\delta}: \delta<\gamma\}, v_\gamma): \gamma\leq \beta\}$ satisfies our purposes.

Now consider $v$ to be a $w^*$-cluster point of $\{v_\beta\}_{\beta\in\omega_1}$. 
Let us prove that $v_\alpha$ satisfies the desired properties.

\begin{enumerate}
\item Let us prove that $v(g_n)=u(g_n)$ for every $n\in\mathbb N$. To this end pick $\varepsilon>0$, $n\in\mathbb N$, and find $\gamma>\omega_0$ so that $\vert (v-v_\gamma)(g_n)\vert<\varepsilon$. Since $v_\delta(g_n)=u(g_n)$ holds for every $\delta\geq \omega_0$ it follows that
$$\vert (v-u)(g_n)\vert=\vert (v-v_\gamma)(g_n)\vert<\varepsilon.$$
Since $\varepsilon>0$ was arbitrary we are done.
\item Given $x\in S_Z$ it follows that 
$$\Vert x+v\Vert=2.$$
To this end, pick $\varepsilon>0$. Since $\{x_\beta:\beta<\omega_1\}$ is dense in $S_Z$ find $\beta<\omega_1$ such that $\Vert x-x_\beta\Vert<\frac{\varepsilon}{3}$. Since $\Vert x_\beta+v_\beta\Vert=2$ find $\gamma<\beta$ such that
$$(z_\beta+v_\beta)(f_{\beta,\gamma})>2-\frac{\varepsilon}{3}.$$
Now, given any $\beta'>\beta$ we have that
$$(z_\beta+v_{\beta'})(f_{\beta,\gamma})=(z_\beta+v_\beta)(f_{\beta,\gamma})>2-\frac{\varepsilon}{3}.$$
Since $v$ is a $w^*$-cluster point of $\{v_\beta:\beta<\omega_1\}$ we obtain that
$$2-\frac{\varepsilon}{3}\leq (z_\beta+v)(f_{\beta,\gamma})\leq \Vert x_\beta+v\Vert\leq \Vert x+v\Vert+\frac{\varepsilon}{3},$$
so $\Vert x+v\Vert>2-\varepsilon$. Since $\varepsilon>0$ was arbitrary we also conclude that $\Vert x+v\Vert=2$. Finally, since $x\in S_Z$ was arbitrary, a convexity argument yields that
$$\Vert x+v\Vert=1+\Vert x\Vert$$
holds for every $x\in Z$.
\end{enumerate}
\end{proof}

Since every Banach space is trivially an almost isometric ideal in itself, the following result follows.

\begin{theorem}\label{theo:ADPpartial}
Let $X$ be a Banach space with the almost Daugavet property with respect to $Y\subseteq X^*$ such that $\dens(X)=\omega_1$. Let $u\in B_{X^{**}}$ and $\{g_n:n\in\mathbb N\}\subseteq S_Y$. Then we can find $v\in S_{X^{**}}$ satisfying the following two assertions:
\begin{enumerate}
\item $\Vert x+v\Vert=1+\Vert x\Vert$  for every $x\in X$.
\item $v(g_n)=u(g_n)$  for every $n\in\mathbb N$.
\end{enumerate}
\end{theorem}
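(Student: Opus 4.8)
The plan is to deduce Theorem~\ref{theo:ADPpartial} as a direct corollary of Theorem~\ref{theo:almostdp} by choosing the almost isometric ideal $Z$ to be $X$ itself. Indeed, as remarked just before the statement, every Banach space is trivially an almost isometric ideal in itself: for every finite-dimensional $E\subseteq X$ the identity operator $T=\mathrm{id}_E:E\to X$ fixes $E$ pointwise and is a (genuine, hence $(1+\varepsilon)$-) isometry, so conditions \eqref{item:ai-1} and \eqref{item:ai-2} hold trivially. Thus $Z:=X$ is an almost isometric ideal in $X$ with $\dens(Z)=\dens(X)=\omega_1$.

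The second thing I would check is that one may take the associated almost-isometric Hahn--Banach extension operator $\varphi:Z^*\to X^*$ to be the identity on $X^*$. When $Z=X$ we have $Z^*=X^*$, and the identity map obviously satisfies $\varphi(z^*)(z)=z^*(z)$ for all $z\in Z$, $z^*\in Z^*$, as well as the finite-dimensional perturbation property in Theorem~\ref{theo:hbaioper} (again with $T=\mathrm{id}_E$, since $f(T(e))=f(e)=\varphi(f)(e)$). In particular $\varphi(Z^*)=X^*\supseteq S_Y\supseteq\{g_n:n\in\mathbb N\}$, so the hypothesis of Theorem~\ref{theo:almostdp} that $g_n\in\varphi(Z^*)$ for every $n$ is automatically met for any choice of $\{g_n:n\in\mathbb N\}\subseteq S_Y$.

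With these identifications, I would simply apply Theorem~\ref{theo:almostdp} to the given $u\in B_{X^{**}}$, the ideal $Z=X$, and the sequence $\{g_n:n\in\mathbb N\}\subseteq S_Y$. The theorem produces $v\in S_{X^{**}}$ with $\Vert x+v\Vert=1+\Vert x\Vert$ for every $x\in Z=X$ and with $v(g_n)=u(g_n)$ for every $n\in\mathbb N$, which is exactly assertions (1) and (2) of Theorem~\ref{theo:ADPpartial}. So the proof is essentially one line invoking the previous theorem.

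Since this is a pure corollary, there is no real obstacle; the only thing worth being careful about is making the triviality "$X$ is an ai-ideal in $X$ with the identity as Hahn--Banach extension operator" explicit, so that the reader sees that the countability condition $g_n\in\varphi(Z^*)$ imposes nothing when $Z=X$. The bulk of the work has already been done in Theorem~\ref{theo:almostdp}, whose transfinite induction on $\omega_1$ (built on the separable base case Lemma~\ref{lemma:adpseparable}, itself resting on Godefroy--Kalton \cite[Lemma 9.1]{gk} and the ball-topology argument) is where the density-character hypothesis $\dens(X)=\omega_1$ is genuinely used.

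\begin{proof}
Every Banach space is an almost isometric ideal in itself: for a finite-dimensional subspace $E\subseteq X$ and $\varepsilon>0$ the identity $T=\mathrm{id}_E\colon E\to X$ fixes $E$ pointwise and is an isometry, so conditions \eqref{item:ai-1} and \eqref{item:ai-2} hold. Hence $Z:=X$ is an almost isometric ideal in $X$ with $\dens(Z)=\dens(X)=\omega_1$. Since $Z=X$, we may take the almost-isometric Hahn--Banach extension operator $\varphi\colon Z^*\to X^*$ of Theorem~\ref{theo:hbaioper} to be the identity on $X^*$; indeed $\varphi(z^*)(z)=z^*(z)$ for all $z\in Z,\ z^*\in Z^*$, and for any finite-dimensional $E\subseteq X$ and $F\subseteq Z^*$ the operator $T=\mathrm{id}_E$ satisfies $f(T(e))=f(e)=\varphi(f)(e)$ for all $e\in E$, $f\in F$. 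In particular $\varphi(Z^*)=X^*\supseteq S_Y$, so $g_n\in\varphi(Z^*)$ for every $n\in\mathbb N$.

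Therefore Theorem~\ref{theo:almostdp}, applied to the given $u\in B_{X^{**}}$, to the almost isometric ideal $Z=X$, and to the sequence $\{g_n:n\in\mathbb N\}\subseteq S_Y$, yields $v\in S_{X^{**}}$ with $\Vert x+v\Vert=1+\Vert x\Vert$ for every $x\in Z=X$ and $v(g_n)=u(g_n)$ for every $n\in\mathbb N$. This is precisely the conclusion of the theorem.
\end{proof}
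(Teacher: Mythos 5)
Your proposal is exactly the paper's argument: the paper states Theorem \ref{theo:ADPpartial} immediately after Theorem \ref{theo:almostdp} with the one-line remark that every Banach space is trivially an almost isometric ideal in itself, which is precisely your specialisation $Z=X$ with $\varphi$ the identity on $X^*$ so that the condition $g_n\in\varphi(Z^*)$ is automatic. Your write-up just makes this instantiation explicit, so it is correct and takes essentially the same approach.
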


As a consequence we obtain the following strengthening of the Daugavet property, which extends \cite[Theorem 3.2]{rueda}.

\begin{theorem}\label{theocaradp}
Let $X$ be a Banach space with $\dens(X)=\omega_1$. The following assertions are equivalent:
\begin{enumerate}
\item\label{teodaugabidu1} $X$ has the Daugavet property, that is, for every $x\in S_X$, every non-empty relatively weakly open subset $W$ of $B_X$ and every $\varepsilon>0$ there exists $y\in W$ such that $\Vert x+y\Vert>2-\varepsilon$.

\item\label{teodaugabidu2} For every non-empty relatively weakly-star open subset $W$ of $B_{X^{**}}$ there exists $v\in S_{X^{**}}\cap W$ such that 
$$\Vert x+v\Vert=1+\Vert x\Vert$$
holds for every $x\in X$.
\end{enumerate}
\end{theorem}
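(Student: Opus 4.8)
The plan is to derive both implications from Theorem~\ref{theo:ADPpartial}, which already contains all the transfinite--induction content, together with an elementary Goldstine--type density argument; the hypothesis $\dens(X)=\omega_1$ will enter only through that theorem, so the converse implication will in fact hold in arbitrary Banach spaces.

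For \eqref{teodaugabidu1}$\Rightarrow$\eqref{teodaugabidu2} I would use that the Daugavet property of $X$ is exactly the almost Daugavet property with respect to $Y=X^*$. Given a non-empty relatively weak-star open $W\subseteq B_{X^{**}}$, I would fix $u\in W$ together with a basic neighbourhood $\{w\in B_{X^{**}}:\abs{(w-u)(g_i)}<\delta,\ 1\le i\le n\}\subseteq W$ with $g_1,\dots,g_n\in S_{X^*}$ (normalising and discarding the zero ones), pad $(g_i)$ to an infinite sequence in $S_{X^*}$, and apply Theorem~\ref{theo:ADPpartial} to produce $v\in S_{X^{**}}$ with $\norm{x+v}=1+\norm{x}$ for all $x\in X$ and $v(g_i)=u(g_i)$ for every $i$. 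Then $v$ lies in the chosen neighbourhood, hence $v\in S_{X^{**}}\cap W$, which is exactly what \eqref{teodaugabidu2} requires.

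For \eqref{teodaugabidu2}$\Rightarrow$\eqref{teodaugabidu1} I would fix $x\in S_X$, $\varepsilon>0$ and a non-empty relatively weakly open $W\subseteq B_X$, which may be assumed basic, say $W=\{z\in B_X:\abs{(z-z_0)(h_j)}<\eta,\ 1\le j\le m\}$ with $h_j\in X^*$. Setting $\widehat W:=\{w\in B_{X^{**}}:\abs{(w-z_0)(h_j)}<\eta,\ 1\le j\le m\}$, a non-empty relatively weak-star open subset of $B_{X^{**}}$ with $\widehat W\cap B_X=W$, hypothesis \eqref{teodaugabidu2} yields $v\in S_{X^{**}}\cap\widehat W$ with $\norm{z+v}=1+\norm{z}$ for all $z\in X$; in particular $\norm{x+v}=2$. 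I would then choose $f\in S_{X^*}$ with $f(x+v)>2-\tfrac{\varepsilon}{2}$ (possible since $X^*$ is norming for $X^{**}$), refine to the non-empty relatively weak-star open set $\widehat W':=\widehat W\cap\{w\in B_{X^{**}}:w(f)>f(v)-\tfrac{\varepsilon}{2}\}$, and invoke Goldstine's theorem to pick $y\in B_X\cap\widehat W'\subseteq W$. Since $f(y)>f(v)-\tfrac{\varepsilon}{2}$, this gives
\[
\norm{x+y}\ge f(x+y)=f(x)+f(y)>f(x)+f(v)-\tfrac{\varepsilon}{2}=f(x+v)-\tfrac{\varepsilon}{2}>2-\varepsilon,
\]
which is precisely the Daugavet property as formulated in \eqref{teodaugabidu1}.

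The only step requiring genuine care is this last one: passing from an $L$-orthogonal element $v\in X^{**}$ back to an honest $y\in W\cap B_X$ with $\norm{x+y}$ near $2$ is not a plain application of the weak-star density of $B_X$, and it forces one first to cut $\widehat W$ down by the extra linear constraint $w(f)>f(v)-\tfrac{\varepsilon}{2}$ coming from a functional $f\in X^*$ nearly norming $x+v$. Everything else is routine bookkeeping with basic neighbourhoods.
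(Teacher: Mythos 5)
Your proposal is correct and follows essentially the same route as the paper: the implication \eqref{teodaugabidu1}$\Rightarrow$\eqref{teodaugabidu2} is obtained exactly as in the paper by applying Theorem~\ref{theo:ADPpartial} to the finitely many functionals defining a basic weak-star neighbourhood of a point $u\in W$, and the converse, which the paper dismisses as obvious, is what you fill in with the standard slice-plus-Goldstine argument. That filled-in argument is sound, so there is nothing to object to.
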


\begin{proof}
(2)$\Rightarrow$(1) is obvious. For the converse, take a non-empty weakly-star open set $W$ of $B_{X^{**}}$ and $u\in W\cap S_{X^{**}}$. With no loss of generality we can assume that $W=\bigcap\limits_{i=1}^n S(B_{X^{**}}, f_i,\alpha_i)$, for suitable $f_i\in X^*$ and $\alpha_i>0$. By Theorem \ref{theo:ADPpartial} we can find an element $v\in S_{X^{**}}$ such that
\begin{enumerate}
\item $\Vert x+v\Vert=1+\Vert x\Vert$ for every $x\in X$ and,
\item $v(f_i)=u(f_i)$ for every $i\in\{1,\ldots, n\}$.
\end{enumerate}
Now condition (2) above implies that $v\in W$ since $u\in W$, so we are done.\end{proof}

The following two examples show that Theorems \ref{theo:ADPpartial} and \ref{theocaradp} are sharp, at least, under continuum hypothesis.

\begin{example}\label{exam:cantinonufuncio}
Assume CH, and let $X:=\ell_2(\omega_1)\iten C([0,1])$. Notice that $\dens(X)=\dens(X^*)=\omega_1$ ($\dens(C([0,1])^*=c=\omega_1$). It follows that $X$ has the Daugavet property (c.f. e.g. \cite[P. 81]{werner}). Now take a dense set $\{x^*_\alpha:\alpha<\omega_1\}\subseteq X^*$ and $x\in X$. Notice that if $u\in X^{**}$ satisfies that $u(x^*_\alpha)=x(x^*_\alpha)$ holds for every $\alpha\in \omega_1$ then $u=x$. This implies that Theorem \ref{theo:ADPpartial} does not hold if we take an uncontably family of functionals $\{g_\alpha: \alpha\leq \dens(X)\}$.
\end{example}

\begin{example}\label{example:w2false}
Assume CH and let $X=\ell_2(\omega_2)\iten C([0,1])$. Notice that, as in the above example, $X$ enjoys the Daugavet property. However, $X$ does not have any $L$-orthogonal. To see it, assume by contradiction that there exists an $L$-orthogonal $T\in L(\ell_2(\omega_2),C([0,1])^{**})$. By Proposition \ref{prop:previocontraeje} then $T$ is an into isometry. Moreover, notice that $T$ is an adjoint operator (say $T=S^*$) because $\ell_2(\omega_2)$ is reflexive. Now $S:C([0,1])^*\longrightarrow \ell_2(\omega_2)$ is surjective because $S^*$ is an isometry. However, this is a contradiction because $\card(C([0,1])^*)=c=\omega_1<\omega_2\leq \card(\ell_2(\omega_2))$.
\end{example}

\section{Daugavet property and $L$-embedded spaces}\label{section:L-embebidos}

In order to obtain more consequences from Theorem \ref{theocaradp} we consider the following characterisation of the Daugavet property in $L$-embedded spaces, which is an extension of \cite[Theorem 3.3]{rueda}.

\begin{theorem}\label{equiLembe}
Let $X$ be an $L$-embedded Banach space with $\dens(X)=\omega_1$. Assume that $X^{**}=X\oplus_1 Z$. Then, the following are equivalent:
\begin{enumerate}
\item \label{equiLembe1} $X^*$ has the Daugavet property.
\item \label{equiLembe2} $X$ has the Daugavet property.
\item \label{equiLembe3} $B_Z$ is weak-star dense in $B_{X^{**}}$.
\end{enumerate}
\end{theorem}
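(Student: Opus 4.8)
The plan is to prove the cycle of implications $(\ref{equiLembe3})\Rightarrow(\ref{equiLembe2})\Rightarrow(\ref{equiLembe1})\Rightarrow(\ref{equiLembe3})$, where the density hypothesis $\dens(X)=\omega_1$ is used only in the last implication via Theorem \ref{theocaradp}. For $(\ref{equiLembe3})\Rightarrow(\ref{equiLembe2})$, I would argue that the weak-star density of $B_Z$ in $B_{X^{**}}$ produces $L$-orthogonal elements for $X$: every $z\in B_Z$ satisfies $\Vert x+z\Vert = \Vert x\Vert + \Vert z\Vert$ by the $\ell_1$-decomposition $X^{**}=X\oplus_1 Z$, so $S_Z$ is a set of $L$-orthogonal elements to $X$ that is weak-star dense in $S_{X^{**}}$ (density of $B_Z$ in $B_{X^{**}}$ forces density of its sphere by a standard normalisation/scaling argument). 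Then the characterisation in Theorem \ref{theocaradp}, implication $(\ref{teodaugabidu2})\Rightarrow(\ref{teodaugabidu1})$, gives the Daugavet property of $X$; alternatively one checks directly that given $x\in S_X$, a weakly open $W\subseteq B_X$ and $\varepsilon>0$, picking $z\in S_Z$ close enough (weak-star) to a suitable extreme point and using the Bourgain-type Lemma \ref{lemabourrevisi} together with $\Vert x+z\Vert=2$ yields the required slice estimate.

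For $(\ref{equiLembe2})\Rightarrow(\ref{equiLembe1})$ I would use the known duality result for the Daugavet property: if $X$ has the Daugavet property then so does $X^*$ (this is classical — see \cite{werner} or the standard references — and follows because rank-one operators on $X^*$ of the form $x^{**}\otimes x^*$ can be tested against the Daugavet equation on $X$ via $w^*$-density of $X$ in $X^{**}$, or directly from the slice characterisation). This implication does not need $L$-embeddedness or the density restriction. The subtle point to get right is simply citing or reproducing the correct direction of the duality; since $X$ has the Daugavet property, every $w^*$-slice of $B_{X^*}$ has diameter $2$ and more, which upgrades to the full Daugavet property of $X^*$.

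Finally, for $(\ref{equiLembe1})\Rightarrow(\ref{equiLembe3})$, the idea is: $X^*$ has the Daugavet property, hence a fortiori $X^*$ has the almost Daugavet property with respect to any norming subspace, in particular with respect to $X\subseteq X^{**}$ seen inside $(X^*)^*$. Since $\dens(X)=\omega_1$ and $X^*$ is $M$-embedded-dual type... more precisely, here I would instead work directly with $X$: as $X$ is $L$-embedded, $X^{**}=X\oplus_1 Z$ and the $w^*$-topology interacts with this decomposition so that the band projection $P:X^{**}\to X$ is $w^*$-$w^*$ continuous (a standard fact for $L$-embedded spaces — see \cite[Ch.~IV]{hww}); then an element $v\in S_{X^{**}}$ that is $L$-orthogonal to $X$ must lie in $Z$, because writing $v=x_0+z_0$ with $x_0\in X,z_0\in Z$, the equality $\Vert x+v\Vert=1+\Vert x\Vert$ for all $x\in X$ forces $x_0=0$ (take $x=-nx_0$ and compare norms using the $\ell_1$-sum). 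Now, since $X^*$ has the Daugavet property it is in particular non-trivially so, hence $X$ has the Daugavet property by the already-proved equivalence $(\ref{equiLembe1})\Leftrightarrow(\ref{equiLembe2})$ — wait, that is circular, so instead I invoke: $X^*$ Daugavet $\Rightarrow$ $X$ Daugavet is also known (the converse duality direction, \cite{werner}), giving us $X$ Daugavet, and then Theorem \ref{theocaradp} $(\ref{teodaugabidu1})\Rightarrow(\ref{teodaugabidu2})$ yields that the $L$-orthogonal elements are $w^*$-dense in $S_{X^{**}}$; since these all lie in $Z\cap S_{X^{**}}\subseteq B_Z$, and $w^*$-density of a subset of $S_{X^{**}}$ in $S_{X^{**}}$ gives $w^*$-density of its closed convex hull, we conclude $B_Z$ is $w^*$-dense in $B_{X^{**}}$.

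I expect the main obstacle to be the bookkeeping in $(\ref{equiLembe1})\Rightarrow(\ref{equiLembe3})$: one must carefully justify that $L$-orthogonality to $X$ of an element $v\in S_{X^{**}}$ is equivalent to $v\in Z$ (using the $\ell_1$-decomposition and that $\Vert \cdot\Vert$ on $X\oplus_1 Z$ is additive), and then that $w^*$-density of $\{v\in S_{X^{**}}:v\text{ is }L\text{-orthogonal to }X\}$ in $S_{X^{**}}$ promotes to $w^*$-density of $B_Z$ in $B_{X^{**}}$ via convex combinations and scaling — together with pinning down which direction of the Daugavet duality ($X$ vs. $X^*$) is being used at each step so the argument is not circular. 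The actual analytic content is light; the care is in the order of invocations.
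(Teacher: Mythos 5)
Your cycle does not close: the middle step $(\ref{equiLembe2})\Rightarrow(\ref{equiLembe1})$ rests on the claim that the Daugavet property always passes from $X$ to $X^*$, and that claim is false. The classical duality result goes only one way: if $X^*$ has the Daugavet property then so does $X$ (adjoints of rank-one operators on $X$ are rank-one on $X^*$, and $\Vert I+T\Vert=\Vert I+T^*\Vert$); the converse fails in general, the standard counterexample being $C([0,1])$, which has the Daugavet property while $C([0,1])^*$, an $L_1(\mu)$-space over a measure with atoms, has denting points in its unit ball and therefore fails it. In fact, $(\ref{equiLembe2})\Rightarrow(\ref{equiLembe1})$ is precisely the nontrivial content of the theorem, and it is where both extra hypotheses enter: the paper obtains it by passing through $(\ref{equiLembe3})$, namely $(\ref{equiLembe2})\Rightarrow(\ref{equiLembe3})$ via Theorem \ref{theocaradp} (this uses $\dens(X)=\omega_1$) combined with the $\ell_1$-decomposition argument that any $L$-orthogonal $v=x_0+z_0$ forces $x_0=0$, and then $(\ref{equiLembe3})\Rightarrow(\ref{equiLembe1})$, which is taken from \cite[Theorem 2.2]{bm} (the label in the printed proof is a typo; what is cited there is the implication from $w^*$-density of $B_Z$ to the Daugavet property of $X^*$ for $L$-embedded spaces). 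Your proposal never supplies any argument reaching statement $(\ref{equiLembe1})$: the pieces you do justify --- $(\ref{equiLembe1})\Rightarrow(\ref{equiLembe2})$ (the correct duality direction, used inside your last paragraph), $(\ref{equiLembe2})\Rightarrow(\ref{equiLembe3})$ (hidden inside your $(\ref{equiLembe1})\Rightarrow(\ref{equiLembe3})$ argument, and essentially identical to the paper's), and $(\ref{equiLembe3})\Rightarrow(\ref{equiLembe2})$ --- are fine, but once the false duality step is removed nothing proves that $(\ref{equiLembe2})$ or $(\ref{equiLembe3})$ implies $(\ref{equiLembe1})$, so the three-way equivalence is not established.

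Two smaller points. The ``standard fact'' you invoke, that the $L$-projection $P:X^{**}\to X$ is $w^*$-$w^*$-continuous, is also false: a $w^*$-continuous projection has $w^*$-closed range, and $X$ is $w^*$-dense in $X^{**}$, so this would force $X$ reflexive; fortunately you do not actually use it, since the $\ell_1$-additivity computation you give (which is exactly the one in the paper's proof of $(\ref{equiLembe2})\Rightarrow(\ref{equiLembe3})$) is the correct way to see that $L$-orthogonal elements of $S_{X^{**}}$ lie in $Z$. Finally, in $(\ref{equiLembe3})\Rightarrow(\ref{equiLembe2})$, to feed Theorem \ref{theocaradp} you need points of $S_{X^{**}}\cap Z$, not merely of $B_Z$, in each $w^*$-open set; your normalisation remark is repairable (perturb within $Z\cap\bigcap_i\kernel f_i$, which is nonzero since $Z$ must be infinite-dimensional, or argue directly with Goldstine and $w^*$-lower semicontinuity of the norm), but as written it is only a gesture.
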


\begin{proof}
(\ref{equiLembe1})$\Rightarrow$
(\ref{equiLembe2}) is obvious.

(\ref{equiLembe2})$\Rightarrow$
(\ref{equiLembe3}). Let $W$ be a non-empty weakly-star open subset of $B_{X^{**}}$ and let us prove that $B_Z\cap W\neq \emptyset$. By Theorem \ref{theocaradp} we can find $u\in W\cap S_{X^{**}}$ such that
$$\Vert x+u\Vert=1+\Vert x\Vert$$
for every $x\in X$. Since $u\in X^{**}$ we can find $x\in X$ and $z\in Z$ such that $u=x+z$. Now
$$1\geq \Vert z\Vert=\Vert -x+(x+z)\Vert=1+\Vert x\Vert.$$
This implies that $x=0$ and, consequently, $u\in B_Z$. So $W\cap B_Z\neq \emptyset$, as desired.

(\ref{equiLembe2})$\Rightarrow$
(\ref{equiLembe3}) follows from \cite[Theorem 2.2]{bm}.
\end{proof}

This result generalises \cite[Theorem 3.2]{bm}, where the authors proved that a real or complex $JBW^*$-triple $X$ has the Daugavet property if, and only if, its predual $X_*$ (which is an $L$-embedded Banach space) has the Daugavet property.

Now, following word-by-word the proof of \cite[Theorem 3.7]{rueda}, we get the next result, which gives an affirmative answer to \cite[Problem 5.2]{rueda} in the following sense.

\begin{theorem}\label{tensoLembesepa}
Let $X$ be an $L$-embedded Banach space with the Daugavet property and $\dens(X)=\omega_1$, and let $Y$ be a non-zero Banach space. If either $X^{**}$ or $Y$ has the metric approximation property then $X\pten Y$ has the Daugavet property.
\end{theorem}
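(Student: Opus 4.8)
The strategy is to run the proof of \cite[Theorem 3.7]{rueda} unchanged, the only new ingredient being that Theorem~\ref{equiLembe} delivers, when $\dens(X)=\omega_1$, the weak-star density that in the separable case came from \cite[Theorem 3.3]{rueda}. Write $X^{**}=X\oplus_1 Z$. Since $X$ is $L$-embedded, has the Daugavet property and $\dens(X)=\omega_1$, Theorem~\ref{equiLembe} gives that $B_Z$ is $w^*$-dense in $B_{X^{**}}$. As the evaluation $x^{**}\mapsto x^{**}(g)$ is $w^*$-continuous on $B_{X^{**}}$ and $\sup_{x^{**}\in B_{X^{**}}}x^{**}(g)=\norm g$ by Goldstine, this yields
$$\sup_{z\in B_Z}z(g)=\norm g\qquad\text{for every }g\in X^*,$$
and this is the only property of $X$ that the argument uses.

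Next I would fix the bidual picture. Because $X^{**}$ or $Y$ has the metric approximation property, $X^{**}\pten Y$ is an isometric subspace of $(X\pten Y)^{**}$ (Section~\ref{section:preliminares}), compatibly with the canonical embeddings of $X\pten Y$. Since the projective tensor product distributes isometrically over $\ell_1$-sums,
$$X^{**}\pten Y=(X\oplus_1 Z)\pten Y=(X\pten Y)\oplus_1(Z\pten Y),$$
the first summand being the canonical copy of $X\pten Y$ inside $(X\pten Y)^{**}$. Hence $Z\pten Y$ is a subspace of $(X\pten Y)^{**}$ for which every element is $L$-orthogonal to $X\pten Y$:
$$\norm{w+\zeta}_{(X\pten Y)^{**}}=\norm w+\norm\zeta\qquad(w\in X\pten Y,\ \zeta\in Z\pten Y).$$
Here $Z\neq\{0\}$ since a space with the Daugavet property is not reflexive, and $Y\neq\{0\}$, so $Z\pten Y\neq\{0\}$.

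Now I would check the standard slice form of the Daugavet property for $W:=X\pten Y$ (see e.g. \cite{werner}): for each $w\in S_W$, each $\Phi\in S_{W^*}$, each $\alpha>0$ and each $\varepsilon>0$ one must find $v$ in the slice $S:=S(B_W,\Phi,\alpha)$ with $\norm{w+v}>2-\varepsilon$. Recall $W^*=L(X,Y^*)$; for $y\in Y$ put $\Phi_y\in X^*$, $\Phi_y(x)=\langle\Phi x,y\rangle$, so that under the identifications above $\langle z\otimes y,\Phi\rangle=z(\Phi_y)$ for $z\in Z$. Using $B_{Z\pten Y}=\cco\{z\otimes y:z\in B_Z,\,y\in B_Y\}$ and the first displayed identity,
$$\sup_{\zeta\in B_{Z\pten Y}}\langle\zeta,\Phi\rangle=\sup_{y\in B_Y}\ \sup_{z\in B_Z}z(\Phi_y)=\sup_{y\in B_Y}\norm{\Phi_y}=\norm\Phi=1.$$
So there is $\zeta\in B_{Z\pten Y}$ with $\langle\zeta,\Phi\rangle>1-\alpha$, and after normalising we may assume $\zeta\in S_{Z\pten Y}$, still with $\langle\zeta,\Phi\rangle>1-\alpha$. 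Seen inside $S_{W^{**}}$, this $\zeta$ lies in the relatively $w^*$-open set $\{\mu\in B_{W^{**}}:\langle\mu,\Phi\rangle>1-\alpha\}$, hence in the $w^*$-closure of $S$ (Goldstine), while $\norm{w+\zeta}_{W^{**}}=2$ by the $L$-orthogonality above. Picking $\Lambda\in S_{W^*}$ with $\langle w+\zeta,\Lambda\rangle>2-\varepsilon/2$ and approximating $\zeta$ in the $w^*$-topology by points of $S$, I would finally obtain $v\in S$ with $\langle w+v,\Lambda\rangle>2-\varepsilon$, hence $\norm{w+v}>2-\varepsilon$; this gives the Daugavet property of $X\pten Y$.

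No step is genuinely hard: the real point is that Theorem~\ref{equiLembe} raises the weak-star density of $B_Z$ from the separable setting to $\dens(X)=\omega_1$, which is exactly why that hypothesis appears (and why the separable proof transfers verbatim). The one routine point requiring care is the second paragraph: one has to verify that the metric-approximation-property embedding $X^{**}\pten Y\hookrightarrow(X\pten Y)^{**}$ respects both the $\ell_1$-decomposition $X^{**}\pten Y=(X\pten Y)\oplus_1(Z\pten Y)$ and the canonical embeddings, so that the summand $X\pten Y$ arising from $X\subseteq X^{**}$ is literally the canonical copy of $X\pten Y$ in $(X\pten Y)^{**}$, which is what legitimises the displayed $L$-orthogonality.
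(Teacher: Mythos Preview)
Your proof is correct and follows the same overall shape as the paper's: use Theorem~\ref{equiLembe} to land elements of $S_Z$ in every $w^*$-slice of $B_{X^{**}}$, embed $X^{**}\pten Y$ isometrically in $(X\pten Y)^{**}$ via the MAP hypothesis, produce an $L$-orthogonal element in a given $w^*$-slice, and conclude by a Goldstine approximation. The one genuine difference is in how $L$-orthogonality is verified. The paper takes a \emph{simple} tensor $u\otimes y$ with $u\in S_Z$, $y\in S_Y$, and checks $\Vert z+u\otimes y\Vert=1+\Vert z\Vert$ for each $z\in X\pten Y$ by constructing an explicit norm-one operator $\hat T\colon X\oplus_1 Z\to Y^*$ extending a chosen $T\in S_{L(X,Y^*)}$ attaining at $z$; you instead invoke the isometric distributivity $(X\oplus_1 Z)\pten Y=(X\pten Y)\oplus_1(Z\pten Y)$, which yields $L$-orthogonality of the whole of $Z\pten Y$ at once and makes the explicit $\hat T$ unnecessary. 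Your route is more structural and slightly cleaner; the paper's route is more hands-on but avoids having to argue (as you correctly flag) that under the MAP embedding $X^{**}\pten Y\hookrightarrow(X\pten Y)^{**}$ the summand $X\pten Y$ coming from $X\subset X^{**}$ coincides with the canonical copy. Both arguments use Theorem~\ref{equiLembe} in exactly the same way, which is indeed the only point where $\dens(X)=\omega_1$ enters.
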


\begin{proof} Assume with no loss of generality that $X^{**}=X\oplus_1 Z$. We will follow the ideas of \cite[Theorem 3.7]{rueda}. To this end, pick $G\in S_{L(X,Y^*)}$ and $\alpha>0$ and, to prove the theorem, it suffices to find an element $u\in S_{X^{**}}$ and $y\in S_Y$ such that $u(y\circ G)>1-\alpha$ and such that
$$\Vert z+u\otimes y\Vert_{(X\pten Y)^{**}}=1+\Vert z\Vert$$
 for every $z\in X\pten Y$. To do so, by the assumption that either $X^{**}$ or $Y$ has the MAP, it follows that $X^{**}\pten Y$ is an isometric subspace of $(X\pten Y)^{**}$ by \cite[Proposition 2.3]{llr2}, so it suffices to prove that
$$\Vert z+u\otimes y\Vert_{X^{**}\pten Y}=1+\Vert z\Vert$$
 for every $z\in X\pten Y$. To this end, find $x\in S_X$ and $y\in S_Y$ such that $G(x)(y)>1-\alpha$. This means that
$$x\in S(B_X, y\circ G,\alpha).$$
Since $S(B_{X^{**}}, y\circ G,\alpha)$ is a non-empty weakly-star open subset of $B_{X^{**}}$ and $X$ is an $L$-embedded Banach space with the Daugavet property then by Theorem \ref{equiLembe} we can find $u\in S_Z$ such that $u(y\circ G)>1-\alpha$. Let us prove that
$$\Vert z+u\otimes y\Vert_{X^{**}\pten Y}=1+\Vert z\Vert$$
 for every $z\in X\pten Y$. To this end pick $z\in X\pten Y$, $\varepsilon>0$, and take $T\in S_{L(X,Y^*)}$ such that $T(z)=\Vert z\Vert$. Since $\Vert u\Vert=1$ choose $x^*\in S_{X^*}$ such that $u(x^*)>1-\varepsilon$. Pick $y^*\in S_{Y^*}$ such that $y^*(y)=1$ and define $\hat T:X^{**}=X\oplus_1 Z\longrightarrow Y^*$ by the equation
$$\hat T(x+z)=T(x)+z(x^*)y^*.$$
It is not difficult to prove that $\Vert \hat T\Vert=1$. Hence
\[
\begin{split}
    \Vert z+u\otimes y\Vert_{X^{**}\pten Y}\geq \hat T(z+u\otimes y)=T(z)+u(x^*)y^*(y)& =1+u(x^*)\\
    & >2-\varepsilon.
\end{split}
\]
Since $\varepsilon>0$ was arbitrary we conclude the theorem.
\end{proof}

Let us end with some consequences about \textit{$u$-structure} in Banach spaces with the Daugavet property. To this end, according to \cite{gks}, given a Banach space $X$ and a subspace $Y$, we say that \textit{$Y$ is a $u$-summand in $X$} if there exists a subspace $Z$ of $X$ such that $X=Y\oplus Z$ and such that the projection $P:X\longrightarrow X$ such that $P(X)\subseteq Y$ satisfies that $\Vert I-2P\Vert\leq 1$ (in such a case we say that $P$ is a $u$-projection). We say that $Y$ is an \textit{$u$-ideal in $X$} if there exists a $u$-projection $P:X^*\longrightarrow Y^*$ such that $\kernel(P)=Y^\perp$, and we say that $Y$ is an \textit{strict $u$-ideal} in $X$ if $P(X^{***})$ is norming in $X^{***}$. Finally, we say that $X$ is an $u$-ideal if $X$ is an $u$-ideal in $X^{**}$ (under the canonical inclusion).

Let us end the section with the following two consequences of Theorem \ref{theocaradp} about $u$ structure in Banach spaces.

\begin{proposition}
Let $X$ be a Banach space with the Daugavet property and $\dens(X)=\omega_1$. Assume that $X$ is an $u$-summand in its bidual, say $X^{**}=X\oplus Z$. Then $Z$ is $w^*$-dense in $X^*$.
\end{proposition}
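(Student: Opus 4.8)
The plan is to read off the statement from Theorem~\ref{theocaradp}, using that the $u$-projection $P\colon X^{**}\to X^{**}$ onto $X$ (so that $P$ has range $X$, kernel $Z$, and $\Vert I-2P\Vert\le 1$) forces every vector of $X^{**}$ that is $L$-orthogonal to $X$ to lie in $Z$. Concretely, I shall prove that $Z$ is $w^*$-dense in $X^{**}$. First I would fix an arbitrary non-empty relatively weak-star open subset $W$ of $B_{X^{**}}$; since $X$ has the Daugavet property and $\dens(X)=\omega_1$, part~(\ref{teodaugabidu2}) of Theorem~\ref{theocaradp} yields $v\in S_{X^{**}}\cap W$ with $\Vert x+v\Vert=1+\Vert x\Vert$ for every $x\in X$. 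The heart of the matter is to check that such a $v$ automatically belongs to $Z$.

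For that, write $v=x_0+z_0$ with $x_0\in X$ and $z_0\in Z$; this decomposition is unique since $X^{**}=X\oplus Z$. Then $Pv=x_0$, hence $(I-2P)v=z_0-x_0$, and therefore $\Vert z_0-x_0\Vert=\Vert(I-2P)v\Vert\le\Vert v\Vert=1$. As $z_0+x_0=v$ also has norm one, writing $z_0=\frac12\big((z_0+x_0)+(z_0-x_0)\big)$ and applying the triangle inequality gives $\Vert z_0\Vert\le 1$. On the other hand, evaluating $\Vert x+v\Vert=1+\Vert x\Vert$ at $x=-x_0\in X$ gives $\Vert z_0\Vert=\Vert-x_0+v\Vert=1+\Vert x_0\Vert$. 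Combining the two inequalities forces $\Vert x_0\Vert=0$, so $v=z_0\in B_Z$. Hence $v\in B_Z\cap W$, and since $W$ was arbitrary this shows that $B_Z$ is weak-star dense in $B_{X^{**}}$.

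It remains to upgrade this to weak-star density of $Z$ in $X^{**}$. Because $B_{X^{**}}$ is weak-star closed, the previous step gives $\overline{B_Z}^{\,w^*}=B_{X^{**}}$; applying the weak-star homeomorphism $w\mapsto nw$ of $X^{**}$ for each $n\in\mathbb N$ we get $\overline{nB_Z}^{\,w^*}=nB_{X^{**}}$, and since $nB_Z\subseteq Z$ this forces $\overline{Z}^{\,w^*}\supseteq\bigcup_{n\in\mathbb N}nB_{X^{**}}=X^{**}$. Thus $Z$ is weak-star dense in $X^{**}$.

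I do not anticipate a genuine obstacle here. Once Theorem~\ref{theocaradp} is available, the proof is just a few norm estimates; the only conceptual point is that the $L$-orthogonal vector produced there is trapped inside $Z$ after passing through the contraction $I-2P$, and the only (minor) technical point is the final rescaling that turns weak-star density of $B_Z$ in $B_{X^{**}}$ into weak-star density of $Z$ in $X^{**}$.
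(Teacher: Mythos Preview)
Your proof is correct and follows essentially the same approach as the paper: invoke Theorem~\ref{theocaradp} to place an $L$-orthogonal element $v$ in an arbitrary relatively $w^*$-open subset of $B_{X^{**}}$, then use the $u$-summand hypothesis to force $v\in Z$. The only cosmetic difference is in the norm computation: the paper evaluates the $L$-orthogonality at $-2x_0$ to obtain $1+2\Vert x_0\Vert=\Vert v-2x_0\Vert=\Vert(I-2P)v\Vert\le 1$ in one stroke, whereas you evaluate at $-x_0$ and pass through the intermediate bound $\Vert z_0\Vert\le 1$; both work. You also spell out the rescaling from $B_Z$ being $w^*$-dense in $B_{X^{**}}$ to $Z$ being $w^*$-dense in $X^{**}$, which the paper leaves implicit.
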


\begin{proof}
By Theorem \ref{theocaradp} it is enough to prove that every $u\in S_{X^{**}}$ such that
$$\Vert x+u\Vert=1+\Vert x\Vert$$
holds for every $x\in X$ satisfies that $u\in Z$. To this end, pick such an element $u\in S_{X^{**}}$. By the decomposition $X^{**}=X\oplus Z$ we get that there exist (unique) $x\in X$ and $z\in Z$ such that $u=x+z$. Let us prove that $x=0$. Notice that
$$1+2\Vert x\Vert=\Vert u-2x\Vert=\Vert u-2P(u)\Vert\leq \Vert I-2P\Vert\leq 1.$$
By the above inequality we obtain $\Vert x\Vert=0$ or, equivalently, that $u=z\in Z$, as we wanted.\end{proof}

\begin{remark}
In view of the previous proposition, we can wonder whether a Banach space $X$ with the Daugavet property can be a $u$-ideal in its bidual. The answer is positive (e.g. $L_1([0,1])$). However, as a consequence of \cite[Theorem 2.7]{ll}, the answer is negative if we require $X$ to be a strict $u$-ideal.
\end{remark}


\bigskip

\textbf{Acknowledgements:} The authors are grateful with D. Werner for pointing out mistakes and typos in an earlier version of the paper. They also thank G.~Godefroy and M.~Mart\'in for fruitful conversations.

\end{document}